\newcommand{\bi}{\begin{itemize}}
\newcommand{\ei}{\end{itemize}}
\newcommand{\ben}{\begin{enumerate}}
\newcommand{\een}{\end{enumerate}}
\newcommand{\be}{\begin{equation}}
\newcommand{\ee}{\end{equation}}
\newcommand{\bea}{\begin{eqnarray}} 
\newcommand{\eea}{\end{eqnarray}}
\newcommand{\ba}{\begin{align}} 
\newcommand{\ea}{\end{align}}
\newcommand{\bse}{\begin{subequations}} 
\newcommand{\ese}{\end{subequations}}
\newcommand{\bc}{\begin{center}}
\newcommand{\ec}{\end{center}}
\newcommand{\bfi}{\begin{figure}}
\newcommand{\efi}{\end{figure}}
\newcommand{\ca}[2]{\caption{#1 \label{#2}}}
\newcommand{\ig}[2]{\includegraphics[#1]{#2}}
\newcommand{\bmp}[1]{\begin{minipage}{#1}}
\newcommand{\emp}{\end{minipage}}
\newcommand{\bp}{\begin{proof}}
\newcommand{\ep}{\end{proof}}
\newcommand{\ie}{{\it i.e.\ }}
\newcommand{\tbox}[1]{{\mbox{\tiny #1}}}
\newcommand{\mbf}[1]{{\mathbf #1}}
\newcommand{\half}{\mbox{\small $\frac{1}{2}$}}
\newcommand{\C}{\mathbb{C}}
\newcommand{\N}{\mathbb{N}}
\newcommand{\R}{\mathbb{R}}
\newcommand{\Z}{\mathbb{Z}}
\newcommand{\vt}[2]{\left[\begin{array}{r}#1\\#2\end{array}\right]} 
\newcommand{\bigO}{{\mathcal O}}
\DeclareMathOperator{\cond}{cond}
\DeclareMathOperator{\sinc}{sinc}
\newcommand{\pbar}{\overline{p}}
\newcommand{\qbar}{\overline{q}}
\newcommand{\si}{\sigma}
\newcommand{\al}{\alpha}
\newcommand{\bt}{\beta}
\newtheorem{thm}{Theorem}
\newtheorem{lem}[thm]{Lemma}
\newtheorem{cor}[thm]{Corollary}
\newtheorem{pro}[thm]{Proposition}
\newtheorem{rmk}[thm]{Remark}
\begin{document} 

%

\title{How exponentially ill-conditioned are contiguous submatrices of the Fourier matrix?}

  \author{Alex H. Barnett}
\maketitle

\begin{abstract}
  We show that the condition number of any cyclically
  contiguous $p\times q$ submatrix of the
  $N\times N$ discrete Fourier transform (DFT) matrix
  is at least
  $$
  \exp \left( \frac{\pi}{2} \left[\min(p,q)- \frac{pq}{N}\right]\right)~,
  $$
  up to algebraic prefactors.
  That is, fixing any shape parameters $(\al,\bt):=(p/N,q/N)\in(0,1)^2$,
  the growth is $e^{\rho N}$ as $N\to\infty$ with rate
  $\rho = \frac{\pi}{2}[\min(\alpha,\beta)- \alpha\beta]$.
  Such Vandermonde system matrices arise in many applications, such as
  Fourier continuation, super-resolution, and diffraction imaging.
  Our proof uses the Kaiser--Bessel transform pair (of which we give a self-contained proof) and estimates on sums
  over distorted sinc functions,
  to construct a localized trial vector
  whose DFT is also localized.
  We warm up with an elementary proof of the above but with {\em half} the rate,
  via a periodized Gaussian trial vector.
  Using low-rank approximation of the kernel $e^{ixt}$,
  we also prove another lower bound $(4/e\pi \al)^q$, up to algebraic
  prefactors,
  which is stronger than the above for small $\al, \bt$.
  When combined, the bounds are within a factor of two of the
  numerically-measured empirical asymptotic rate, uniformly over $(0,1)^2$,
  and they become sharp in certain regions.
  However, the results are not asymptotic: they apply to
  essentially all $N$, $p$, and $q$, and with all constants explicit.
\end{abstract}

\section{Introduction and main results}

The size-$N$ discrete Fourier transform (DFT) matrix $F$ has elements
\be
F_{jk} = e^{2\pi i jk/N},  \qquad j,k\in\Z, \quad
-N/2 \le j,k < N/2~,
\label{F}
\ee
where the row and column index sets should be taken as $N$-periodic;
we center them on zero for convenience later.%
\footnote{Note that when $N$ is even each set is $\{-N/2,\dots,N/2-1\}$,
  whereas when $N$ is odd it is $\{-(N-1)/2,\dots,(N-1)/2\}$.}
Taking the DFT of a vector in $\C^N$,
for instance via the fast Fourier transform (FFT) algorithm,
is equivalent to multiplication by $F$.
That $F$ is full rank and its matrix condition number, $\cond(F)$,
is $1$ follows from the unitarity of $F/\sqrt{N}$.
However, it is well known that contiguous submatrices
of $F$ of size $p\times q$ are
approximately low rank, with $\epsilon$-rank
of order $pq/N$ \cite{edelman99,oneil07,zhu17}.
On the other hand, their condition number must be finite,
because any submatrix (or its adjoint) may be obtained by deleting columns
of a Vandermonde, hence nonsingular, matrix
($z_j^k$ for the nodes $z_j = e^{2\pi i j/N}$).
Yet, Vandermonde matrices are suspected to be exponentially ill-conditioned
unless nodes are equispaced over the entire unit circle \cite{pan16}.
This leads one naturally to ask: how do
the condition numbers of Fourier submatrices behave?
Their growth has been established to be exponential
\cite[Thm.~3.1]{moitra}, and preliminary
study exists for the case $p=q=N/2$ \cite[Thm.~6.2 and Table~4]{pan16}.
Yet, what is the exponential {\em rate}, and how does it depend on $p$ and $q$?
Figure~\ref{f:nearsymm} illustrates this growth,
and hints at a {\em universal} 
rate
depending only on the scaled submatrix shape.
There appear to have been few rigorous answers to these
quite simple and fundamental questions.

\bfi[t]  
\centering \ig{width=5.3in}{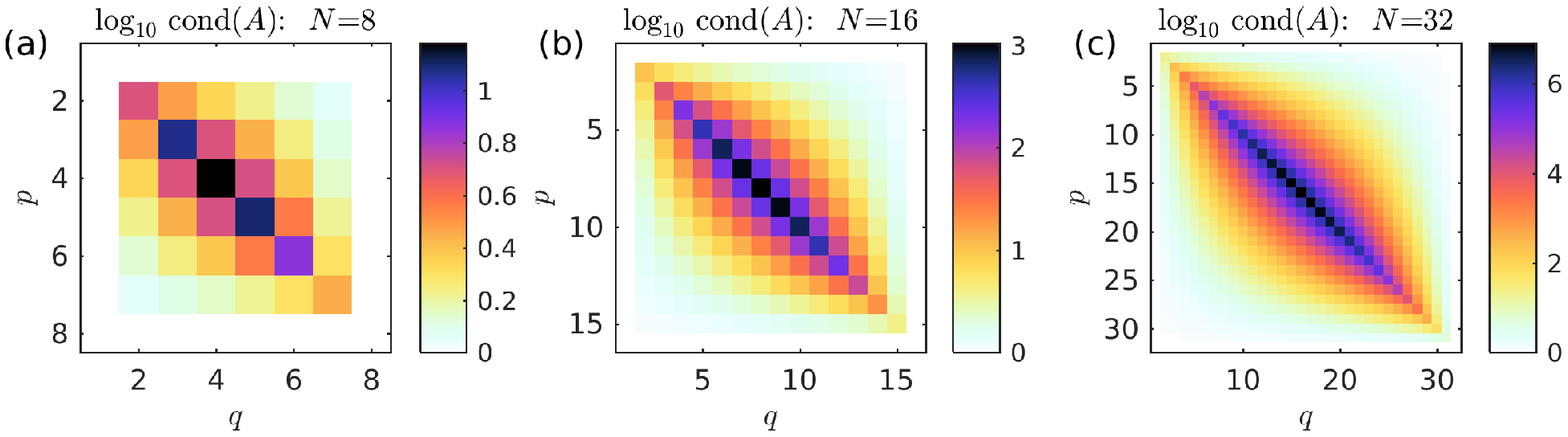}
\ca{Condition numbers of all possible submatrices $A$ of the size-$N$ Fourier
  matrix $F$, as a function of their size $p\times q$.
  Panels (a--c) compare this for three increasing $N$ values, which are
  still rather small.
  Each vertical ($p$) axis is oriented downwards to match the usual sense
  for matrices.
  The color scale is logarithmic, illustrating exponential growth both
  in $N$, and as $(p,q)$ tends to the diagonal.
  See Figure~\ref{f:rates} for a study of the $N\to\infty$ exponential rates
  and comparison to lower bounds proven in this work.
  The diagonal symmetry $p\leftrightarrow q$ is exact for each $N$, but
  the inversion symmetry $(p,q)\leftrightarrow(N-p,N-q)$ is only
  true asymptotically as $N\to\infty$
  (see Section~\ref{s:symm}).}{f:nearsymm}
\efi

The conditioning of Fourier submatrices has consequences
in applications because it controls the numerical stability,
or noise amplification, of various function and image reconstruction problems:
\ben
\item
  In Fourier extension (or continuation) methods
  \cite{boydfext,huybrechs10,matthysen16}, with applications including
  the numerical solution of PDEs \cite{albinFC11},
  Fourier series coefficients are solved by collocation
  on a grid covering a {\em fraction} of the periodic interval.
  In the common case of a uniform grid,
  the system matrix is a contiguous submatrix of $F$
  with shape parameter $\al:=p/N$ equal to the covered fraction,
  and $\bt:=q/N$ equal to this fraction divided by the oversampling.%
  \footnote{Specifically, $\al=1/T$ and
    $\bt = 1/\gamma T$ in the notation of \cite[\S 5]{adcock14}.}
  The apparent exponential ill-conditioning is well documented \cite{boydfext,brubeck19}
  \cite[Fig.~13]{adcock14}.
  However Adcock et al.~\cite{adcock14} 
  concluded, ``there is no existing analysis [of submatrices of $F$]
  akin to that of Slepian's for the prolate matrix\dots''.
  Zhu et al.~\cite{zhu17} have since supplied one such missing piece; this
  paper supplies another.
\item
  Super-resolution imaging has importance
  in microscopy, astronomy, radar, and medicine
  (see \cite{donoho92,candes14,moitra,liliao18,batenkov20}
  and references within).
  Even when source locations are known, and
  other fascinating issues such as sparsity and clustering are put aside,
  a linear system must be solved for the amplitudes,
  given known Fourier series coefficients
 (in the discrete model)
  up to some bandlimit $p$.
  A pathological arrangement for $q$
  such sources is a regular grid (``clump'')
  with spacing some factor (the so-called SRF
  \cite{donoho92,candes14,batenkov20}) times
  smaller than the Nyquist spacing.
  The system matrix becomes a $p\times q$ Fourier submatrix, with
  $\al=1/\mbox{SRF}$,
  and the exponential blow-up
  of its conditioning is a fundamental obstacle in the presence of noise.
\item
  In coherent X-ray diffraction imaging,
  the data are squared magnitudes of the
  Fourier transform of
  an unknown image \cite{miao2015,bendory17}.
  This data often excludes a region around the $\mbf{k}$-space origin,
  due to excessive intensity.
  However, since such data are also the Fourier transform of the
  image {\em autocorrelation}, such missing data may be recovered
  by solving a
  linear system involving a Fourier submatrix \cite{epsteinhole}.
  In the one-dimensional (1D) model, $q \ll N$
  is then the number of missing data,
  and $\al=1-2/m$ where $m>2$ is an oversampling factor.
  The recovered data can help with the subsequent phase retrieval problem;
  this motivated the present study.
\een

More widely, linear systems involving rectangular Vandermonde matrices
arise in a variety of signal processing and parameter identification
problems \cite{bazan00,liliao18}.
The square of the singular values of Fourier submatrices
are the eigenvalues controlling the
maximum space- and frequency-concentration of
{\em periodic discrete prolate spheroidal sequences} (P-DPSS)
\cite{grunbaum81,jain81,matthysen16,zhu17},
the fully discrete ($\C^N\to\C^N$) analogues of
prolate spheroidal wavefunctions \cite{slepianI,osipov}.
As Zhu et al.~\cite{zhu17} state, ``there exist comparatively few results
concerning the P-DPSS eigenvalues.''


\subsection{Results}  
\label{s:res}


Each of our three results is a lower bound on the
condition number of contiguous Fourier submatrices,
that, barring algebraic prefactors, is exponential in
the submatrix size.
Each bound has all constants explicit.
To provide insight, we also present these as $N\to\infty$ asymptotic rates
when the shape parameters $(\al,\bt)$ are held fixed (Table~\ref{t:sum}).
Yet we emphasize that the main theorems are {\em not
  asymptotic results}---in particular, between them they cover all $N$ and,
using the $p\leftrightarrow q$ symmetry of the condition number,
all $p$ and $q$.

We include our first result, even though it will be essentially
superceded by Theorem~\ref{t:kb},
because its prefactor is slightly stronger,
and moreover its elementary proof (Section~\ref{s:m}) is instructive.

\begin{thm}  
  Let $A$ be a cyclically contiguous $p\times q$ submatrix of the
  $N\times N$ discrete Fourier matrix $F$ given by \eqref{F},
  with $2 < q \le p < N-2$.
  Then the condition number of the matrix $A$ obeys
  \be
  \cond(A)
  \;\ge\;
  \frac{\sqrt{p}(1-\pbar/N)^{1/4}}{6\qbar^{1/4} \sqrt{N}}
  e^{\frac{\pi}{4}(1-\pbar/N)\qbar}~.
  \label{condLB}
  \ee
  where $\qbar$ is the largest even integer smaller than $q$,
  and $\pbar$ is the smallest integer of the same parity as $N$ larger than $p$.
  \label{t:m}
\end{thm}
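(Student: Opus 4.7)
The plan is to bound $\cond(A)=\sigma_{\max}(A)/\sigma_{\min}(A)$ from below by handling the two factors separately. For the numerator, since every entry of $A$ has modulus $1$, $\|A\|_F^2=pq$, and because $A$ has at most $\min(p,q)=q$ nonzero singular values, $\sigma_{\max}(A)^2\ge p$. The main work is to produce a trial vector $v\in\C^q$ for which $\|Av\|/\|v\|$ is exponentially small, giving $\sigma_{\min}(A)\le\|Av\|/\|v\|$.

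Using the invariance of $\cond$ under cyclic row and column shifts of $F$ (which are unitary), I may assume the column index set $S$ is centered at $0$ and contains $\{-\qbar/2+1,\dots,\qbar/2\}$, and I choose an integer modulation frequency $j_1$ placed at (or essentially at) the center of the cyclic complement of the row index set $T$. The parity adjustments $\qbar$ even and $\pbar\equiv N\pmod 2$ make the symmetric centered sets fit as exact integer index sets and ensure every $j\in T$ has cyclic distance at least $(N-\pbar)/2$ from $j_1$. I then take
\be
v_k \;=\; e^{2\pi i k j_1/N}\,e^{-k^2/(2\sigma^2)}, \qquad k\in S,
\ee
a modulated, truncated Gaussian of width $\sigma$ to be optimized. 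The Gaussian is natural here because it is near-self-dual under the DFT and its tails admit closed-form estimates.

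Writing $\tilde v\in\C^N$ for the same formula evaluated on all of $\Z/N\Z$, and $\tilde v'\in\C^N$ for the zero-extension of $v$, one has $Av=(F\tilde v')|_T$, and the triangle inequality gives
\be
\|Av\|\;\le\;\|(F\tilde v)|_T\|\;+\;\|F(\tilde v-\tilde v')|_T\|.
\ee
For the first (``Gaussian leaks into $T$'') term, $F\tilde v$ is essentially a Gaussian in $j$ of width $N/(2\pi\sigma)$ centered at $j_1$, so summing its squared values on $T$ at cyclic distance $\ge(N-\pbar)/2$ via a geometric-series bound yields $\|(F\tilde v)|_T\|^2 \lesssim \tfrac{N}{1-\pbar/N}\, e^{-\pi^2\sigma^2(1-\pbar/N)^2}$ with explicit constants. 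For the second (``truncation leak'') term, Parseval bounds it by $N$ times the Gaussian-tail mass outside $S$, of order $\tfrac{\sigma^2}{\qbar}\, e^{-\qbar^2/(4\sigma^2)}$. Balancing the two exponents $\pi^2\sigma^2(1-\pbar/N)^2=\qbar^2/(4\sigma^2)$ fixes the optimal width $\sigma^2=\qbar/(2\pi(1-\pbar/N))$ and makes the common exponent equal to $\pi\qbar(1-\pbar/N)/2$. With this $\sigma$, $\|v\|^2\approx\sigma\sqrt{\pi}$ is essentially the full Gaussian norm (truncation being negligible since $\sigma\ll\qbar$ in the nontrivial regime), and then assembling $\cond(A)^2\ge p\|v\|^2/\|Av\|^2$ and taking a square root yields \eqref{condLB} up to algebraic prefactors.

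The main obstacle I foresee is obtaining both tail bounds with explicit constants sharp enough to produce the stated $1/6$ prefactor: the discrete sum $\sum_{j\in T}|F\tilde v(j)|^2$ must be estimated by a clean geometric-series argument, without losing constants to loose integral approximations, and the aliasing from periodizing the Gaussian onto $\Z/N\Z$ must be shown to be negligible (which it is, provided $\sigma\ll N$, a condition that holds comfortably at the optimal choice above). A secondary issue is verifying that the parity-adjusted centered index sets genuinely fit inside $S$ and $T^c$ respectively; this is precisely why the hypothesis $2<q\le p<N-2$ reserves a small amount of integer slack.
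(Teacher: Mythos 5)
Your proposal is essentially the paper's proof: a modulated Gaussian trial vector, the same two-term decomposition of $\|A\mbf{v}\|$ into a ``frequency leakage into $T$'' piece plus a ``truncation leakage'' piece, the same balancing choice $\sigma^2 = \qbar/(2\pi(1-\pbar/N))$, and the same $\sigma_1(A)\ge\sqrt{p}$ from the Frobenius norm. The one technical choice you make differently is to work with the \emph{truncated} (non-periodized) Gaussian $\tilde v_k = e^{2\pi ikj_1/N}e^{-k^2/2\sigma^2}$ on $\{-N/2,\dots,N/2-1\}$ and then promise to bound the periodization error afterward, whereas the paper periodizes first, i.e.\ uses $f_j = \sum_{n\in\Z}e^{-(j+nN)^2/2\sigma^2}$, so Poisson summation yields an exact closed-form DFT (another periodized Gaussian) and a single pair of monotone-tail estimates suffices. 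The periodized choice is cleaner: it avoids the separate ``$F\tilde v$ is essentially a Gaussian'' approximation and the extra aliasing-negligibility step you flag. Also, rather than your $\|v\|^2\approx\sigma\sqrt{\pi}$ (which requires $\sigma\ll\qbar$ and fails near $\pbar\to N$), the paper uses the crude but robust bound $\|\mbf{v}\|^2\ge f_0^2\ge 1$, which holds unconditionally and is enough for the stated prefactor; this is worth noting if you want to recover the constant $1/6$ without extra case analysis.
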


This applies to  the square or ``tall'' case $q\le p$;
if instead $q>p$ (the submatrix is ``fat''), one applies
this theorem to its Hermitian adjoint $A^\ast$, since
$\cond(A^\ast) = \cond(A)$.
It is thus possible to rephrase the theorem (and the two below) in
a symmetrized form that applies without such conditions on $p$ and $q$.
We will do this in terms of fixed fractional sizes or shape parameters
$\al := p/N$ and $\bt:=q/N$.
Since $q-\qbar$ and $p-\pbar$ are at most 2, they may be absorbed
into a prefactor.
Thus,\footnote{Here we use the $\Omega$ symbol in the lower bound or Knuth sense
  that the right-hand side is $\bigO$ of the left-hand side.}
$$
\cond(A) \; = \; \Omega\bigl( N^{-1/4} e^{\rho(\al,\bt) N} \bigr)
~, \qquad N\to\infty
$$
where the rate $\rho$, and the implied constant,
depend on $\al$ and $\bt$.
Explicitly, $\rho(\al,\bt) = \frac{\pi}{4}\bt(1-\al)$ if $\bt\le\al$.
Combining this with the case $\bt>\al$ by swapping $\al$ and $\bt$,
we get the rate $\rho = \frac{\pi}{4}[\min(\alpha,\beta)- \alpha\beta]$
applying for all $(\al,\bt)\in(0,1)^2$.
These two forms are summarized in the first row of Table~\ref{t:sum}.

Our next result is similar but has a {\em doubled} rate,
and a more involved proof (Section~\ref{s:kb}).
Here $I_0$ will denote the modified Bessel function of order zero
\cite[(10.25.2)]{dlmf}.

\begin{thm} \label{t:kb}   
  Let $A$ be a cyclically contiguous $p\times q$ submatrix of the
  $N\times N$ Fourier matrix, with $1\le q \le p < N$.
  Then
  \be
  \cond(A)
  \;\ge\;
  \frac{I_0\bigl( \frac{\pi}{2}(1-p/N)q \bigr)-1}
       {2\bigl(\sqrt{N/p} + 6q\bigr)}
  ~.
  \label{condkb}
  \ee
\end{thm}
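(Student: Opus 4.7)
The plan is to adapt the trial-vector construction from the proof of Theorem~\ref{t:m}, but with a Kaiser--Bessel (KB) function in place of the periodized Gaussian. The KB function is a natural candidate because it has the defining property that both the function and its continuous Fourier transform are sharply localized: the function is truly compactly supported, while its transform has an ``evanescent cone'' outside of which it is a distorted sinc, decaying polynomially. This is precisely what enables the doubling of the exponential rate compared to the Gaussian. Exploiting the shift invariance of $\cond(A)$ under cyclic shifts of the row and column index sets (each such shift is a unitary diagonal conjugation of $A$), I would first reduce to the case where the column set $K$ is centered at $0$ and the row set $J$ is cyclically centered at the Nyquist index $N/2$. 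Then, setting $b := \frac{\pi}{2}(1-p/N)q$, define the trial vector $v \in \C^q$ by sampling the continuous KB profile: $v_k := f(k)$, where $f(t) := I_0\!\bigl(b \sqrt{1 - (2t/q)^2}\bigr)$ for $|t| \leq q/2$, and zero otherwise.

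The proof then proceeds in the following steps. First, I would establish the Kaiser--Bessel transform pair (the self-contained proof promised in the abstract), namely $\hat f(\omega) = q\,\operatorname{sinhc}\!\bigl(\sqrt{b^2 - (\pi q \omega)^2}\bigr)$ for all $\omega \in \R$, where $\operatorname{sinhc}(z) = \sinh(z)/z$ (interpreted as $\sin(y)/y$ in the imaginary regime $z = iy$); this follows by expanding $I_0$ as a power series and integrating termwise against the Fourier kernel. Second, apply Poisson summation to the compactly-supported sequence $v$ to obtain $\hat v_j = \sum_{m \in \Z} \hat f(j/N + m)$. Third, observe that the choice of $b$ is calibrated so that the central cone $|\omega| < b/(\pi q) = (1 - p/N)/2$, where $\hat f$ grows exponentially, just touches but does not enter the set $\{j/N : j \in J\}$. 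Hence every aliased copy $\hat f(j/N + m)$ for $j \in J$ lies in the oscillatory regime, where $|\hat f(\omega)| \leq q/\sqrt{(\pi q \omega)^2 - b^2}$, and so $\|Av\|^2 = \sum_{j \in J} |\hat v_j|^2$ can be bounded by careful summation of these distorted sincs. Fourth, the elementary lower bound $\|v\| \geq v_0 = I_0(b)$ supplies the desired exponential factor in the numerator. Finally, pairing $v$ with the standard basis column $e_0$, for which $\|A e_0\| = \sqrt{p}$, and using $\cond(A) \geq \|A e_0\| \|v\| / (\|e_0\| \|Av\|)$, yields the claim.

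The main technical obstacle is the sharp estimation in the third step: controlling $\sum_{j \in J} |\hat v_j|^2$ when each $\hat v_j$ is itself a lattice sum of distorted sincs. To produce the denominator $2(\sqrt{N/p} + 6q)$ of~\eqref{condkb}, these sums must be handled with care: the $m = 0$ contribution resembles the $\ell^2$ norm of a sinc-like function sampled at spacing $1/N$ over an index window of length $p$, which should give the $\sqrt{N/p}$ term, while the aliased copies $m \neq 0$ contribute a geometrically-decaying tail of order $q$. Obtaining these explicit, non-asymptotic constants (rather than merely leading-order scalings) is where the distorted sinc estimates alluded to in the abstract do the real work. The slightly unusual ``$-1$'' in the numerator $I_0(b) - 1$ is almost certainly an artefact of that estimation: it likely arises by splitting off an $m = 0$ or $j = 0$ term from the distorted-sinc sum, and has the pleasant side benefit that the bound reduces to the trivial $\cond(A) \geq 0$ in the degenerate cases $p = N$ or $q = 0$.
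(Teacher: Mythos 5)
Your overall strategy matches the paper's: translate the submatrix so the column index set is centered at $0$ and the row set at the Nyquist frequency, take a Kaiser--Bessel trial vector $\mbf{v}$, apply Poisson summation to control $A\mbf{v}$ via aliased copies of $\hat f$, calibrate $\si$ so all aliases fall outside the evanescent cone, lower-bound $\|\mbf{v}\|$ by $f(0)$, and combine with $\sigma_1(A)\ge\sqrt p$. However, there is a genuine gap in the third step, and it is precisely the one that the $-1$ you wave away is designed to fix.

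The raw Kaiser--Bessel profile $\phi$ in \eqref{KBpair} is discontinuous at $\pm 1$, so its transform $\hat\phi$ has tails that are sinc-type oscillations decaying only as $|\omega|^{-1}$. This is exactly the bound you write, $|\hat f(\omega)|\le q/\sqrt{(\pi q\omega)^2-\si^2}$. The aliasing sum $\hat v_j = \sum_{m}\hat f(j/N+m)$ then has a tail whose absolute values behave like $\sum_m 1/(\pi q m)$, which diverges. You cannot simply sum absolute values here, and conditional summation exploiting sign oscillation is also not uniformly available: the sinc is evaluated along an arithmetic progression with an arbitrary offset $\pi q j/N$, and for bad offsets such one-sided sums can diverge logarithmically (cf.\ $\sum_{k>0}\sin(2\pi k+\pi/2)/k=\sum 1/k$). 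So ``careful summation of these distorted sincs'' is not merely laborious --- as stated it fails.

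The paper's fix is to \emph{deplinth} the KB function: take $f(t)=I_0(\si\sqrt{1-(2t/q)^2})-1$ on $|t|\le q/2$, i.e.\ subtract the top-hat of height 1, making $f$ continuous on $\R$. The corresponding transform becomes a \emph{difference} of two $|\omega|^{-1}$-decaying sincs, $q[\sinc\sqrt{(\pi q\omega)^2-\si^2}-\sinc\pi q\omega]$, whose difference decays as $|\omega|^{-2}$, making the aliased sum absolutely convergent; the remaining estimates (Lemmas~\ref{l:sincwarp}--\ref{l:cosR1}) still require real work to get explicit constants, but they now converge. This is also why the numerator in \eqref{condkb} is $I_0(\frac\pi2(1-\al)q)-1$ rather than $I_0$: the $-1$ is not an ``artefact of that estimation'' nor a degenerate-case convenience, it is literally $f(0)$ for the deplinthed profile, and it is load-bearing. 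Your proposed numerator $I_0(\si)$ would follow from the undeplinthed trial vector, but then the corresponding denominator cannot be established by the absolute-value argument you outline.

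One smaller point: your KB transform formula $\hat f(\omega)= q\,\mathrm{sinhc}\sqrt{\si^2-(\pi q\omega)^2}$ and the paper's $q\,\sinc\sqrt{(\pi q\omega)^2-\si^2}$ are the same analytic function (by the identity $\sinh z/z=\sin(iz)/(iz)$), so that part is fine, as is the plan to prove it; the paper proves it by averaging a plane wave over $S^2$ and analytic continuation, which is a different but equally valid route to the same pair.
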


Using the asymptotic $I_0(z) \sim e^{z}/\sqrt{2\pi z}$
given in \cite[(10.3.4)]{dlmf}, and keeping only dominant
terms, we get, in terms of the shape parameters,
$$
\cond(A) \; = \; \Omega\bigl( N^{-3/2} e^{\rho(\al,\bt) N} \bigr)
~, \qquad N\to\infty
$$
with an explicit rate $\rho$ precisely double that from
the first theorem, for each $\al$ and $\bt$. This is summarized
in the abstract, and in the second row of Table~\ref{t:sum}.

Our final main result
(proved in Section~\ref{s:corner})
improves upon the above in the case of
small $\al$ and $\bt$, i.e., in the ``corner'' of $(\al,\bt)$ space.

\begin{thm} \label{t:corner} 
  Let $A$ be a cyclically contiguous $p\times q$ submatrix of the
  $N\times N$ Fourier matrix, with $1< q \le p < 4N / e\pi + 1$.
  Then
  \be
  \cond(A)
  \;\ge\;
 \frac{1- (e\pi(p-1)/4N)}{2\sqrt{q}}
  \left(\frac{4N}{e \pi(p-1)}\right)^{q-1}
  ~.
  \label{condcorner}
  \ee
\end{thm}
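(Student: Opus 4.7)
The plan is to construct an explicit rank-$(q-1)$ approximant $\tilde A$ of $A$ via a truncated Jacobi--Anger expansion of the kernel $e^{ixt}$, bound $\sigma_q(A)\le \|A-\tilde A\|_2$ using the Eckart--Young theorem, and combine with the elementary $\sigma_{\max}(A)\ge \sqrt{p}$ that follows from $\|Ae_k\|_2^2=p$ for any column $k$. After conjugating by unitary diagonal phase matrices (which preserve singular values), we may assume the submatrix has centred index ranges: $A_{jk}=e^{ix_jt_k}$ with $x_j=2\pi j/N$ for $|j|\le (p-1)/2$ and $t_k=k$ for $|k|\le (q-1)/2$. Set $X:=\pi(p-1)/N$ and $T:=(q-1)/2$.

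Substituting $t=T\cos\phi$ into the Jacobi--Anger identity $e^{iz\cos\phi}=J_0(z)+2\sum_{n\ge 1}i^n J_n(z)\cos(n\phi)$ and using $\cos(n\phi)=T_n(t/T)$ yields the absolutely convergent expansion
$$
e^{ixt}\;=\;\sum_{n=0}^{\infty}\epsilon_n\, i^n J_n(xT)\, T_n(t/T),\qquad \epsilon_0=1,\ \epsilon_n=2\ (n\ge 1),
$$
valid for $|t|\le T$. Truncating after $n=q-2$ gives a rank-$(q-1)$ bivariate function $\tilde K$, and $\tilde A_{jk}:=\tilde K(x_j,t_k)$ is then a rank-$(q-1)$ matrix. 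The Poisson integral representation of $J_n$ yields the classical bound $|J_n(z)|\le (|z|/2)^n/n!$ for real $z$, which combined with $|T_n|\le 1$ on $[-1,1]$ produces the pointwise error
$$
\max_{j,k}|A_{jk}-\tilde A_{jk}|\;\le\;2\sum_{n\ge q-1}\frac{b^n}{n!},\qquad b:=\frac{XT}{2}=\frac{\pi(p-1)(q-1)}{4N}.
$$

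Next I bound the tail: consecutive-term ratios are at most $b/q$ for $n\ge q-1$, so $\sum_{n\ge q-1}b^n/n!\le \frac{q}{q-b}\cdot b^{q-1}/(q-1)!$, which is meaningful since the hypothesis $p-1<4N/(e\pi)$ gives $b<q$. Applying Robbins' inequality $(q-1)!\ge \sqrt{2\pi(q-1)}\,((q-1)/e)^{q-1}$ (valid for $q\ge 2$) converts this upper bound to $\frac{q\, r^{q-1}}{(q-b)\sqrt{2\pi(q-1)}}$, where $r:=eb/(q-1)=e\pi(p-1)/(4N)$. The entrywise bound $\|A-\tilde A\|_2\le \|A-\tilde A\|_F\le \sqrt{pq}\max_{j,k}|A_{jk}-\tilde A_{jk}|$ together with Eckart--Young gives $\sigma_q(A)\le \frac{2q\sqrt{pq}}{(q-b)\sqrt{2\pi(q-1)}}\, r^{q-1}$, and dividing into $\sigma_{\max}(A)\ge \sqrt{p}$ yields
$$
\cond(A)\;\ge\;\frac{\sqrt{2\pi(q-1)}\,(1-b/q)}{2\sqrt{q}}\,r^{-(q-1)}.
$$
Finally, $\sqrt{2\pi(q-1)}\ge 1$ for $q\ge 2$, and $b/q=r(q-1)/(eq)\le r$ (since $e>1$), so $1-b/q\ge 1-r$; this recovers the stated bound. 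The hypothesis $p<4N/(e\pi)+1$ is precisely $r<1$, required for tail convergence and for positivity of the right-hand side. The main obstacle is not a single hard step, but arranging constants to obtain the base $4N/(e\pi(p-1))$ rather than $2N/(e\pi(p-1))$: this factor-of-two improvement over the analogous Taylor-series argument stems from the $(|z|/2)^n$ (not $|z|^n$) scaling of the Bessel bound, which is the genuine payoff of expanding in a Chebyshev, rather than monomial, basis.
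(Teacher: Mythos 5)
Your proposal is correct and follows the same overall strategy as the paper's proof of Lemma~\ref{l:corner}: sample the Jacobi--Anger (Bessel--Chebyshev) expansion of $e^{ixt}$ on appropriate grids to realize $A$ up to diagonal phases, truncate after $q-1$ terms to get a rank-$(q-1)$ approximant, invoke Eckart--Young, and divide into $\sigma_1(A)\ge\sqrt{p}$. The one genuine technical difference is how the Bessel tail is controlled. The paper keeps the rank-one blocks $\mbf{u}_n\mbf{v}_n^\ast$ intact, bounds $\|\mbf{u}_n\|\le 2\sqrt{p}\max_{|x|\le W}|J_n(x)|$, and then invokes Siegel's bound $|J_n(nz)|\le g(z)^n$ together with $g(z)\le ez/2$, which immediately yields a geometric series in $r=e\pi(p-1)/4N$. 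You instead pass through the pointwise/Frobenius bound $\|A-\tilde A\|_2\le\sqrt{pq}\max_{j,k}|A_{jk}-\tilde A_{jk}|$, use the elementary series bound $|J_n(z)|\le(|z|/2)^n/n!$, and then need Stirling's (Robbins') lower bound on $(q-1)!$ to recover the same base $r$. Both routes exploit the $(z/2)^n$ scaling to get the factor $4$ rather than $2$, exactly as the paper notes in Remark~\ref{r:cornerrate}; your derivation is arguably more elementary (no Siegel) at the cost of an explicit Stirling step, and your intermediate bound even carries a spare $\sqrt{2\pi(q-1)}$ in the denominator that you discard to match the stated constant. All hypotheses ($r<1$ for convergence, $b<q$ for the ratio test, $q\ge2$ for Robbins) are checked, and the centred-grid parametrization is legitimate since real (not just integer) index shifts are available via diagonal phase multiplication.
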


When $p,q\gg 1$ it again
makes sense to approximate $p-1$ by $p$, and $q-1$ by $q$,
and rephase this in terms of the shape parameters, giving
$$
\cond(A) \; = \; \Omega\bigl( N^{-1/2} e^{\rho(\al,\bt) N} \bigr)
~, \qquad \al,\bt < 4/e\pi\approx 0.468~, \quad N\to\infty
$$
with $\rho = \bt \log(4/e\pi\al)$ when $\bt\le\al$. Its symmetrized
form is listed on the last row of Table~\ref{t:sum}.

When is Theorem~\ref{t:corner} stronger than Theorem~\ref{t:kb}?
Equating their rates $\bt \log(4/e\pi\al) = \rho = \frac{\pi}{2}\bt(1-\al)$
gives a transcendental equation in $\al$ with solution
$\al_\ast \approx 0.117$.
Including the symmetrized result,
Theorem~\ref{t:corner} is then stronger for all $(\al,\bt)\in(0,\al_\ast)^2$,
i.e., in the corner region occupying about $1.4\%$ of shape space.
(In Section~\ref{s:symm} it is shown how this result also applies
asymptotically to $(1-\al,1-\bt)$, hence also in the diagonally-opposite corner.)

\begin{table}[t] 
  \centering
  \footnotesize
  \begin{tabular}{llll}
    theorem & rate $\rho$ for $\bt\le\al$ & rate $\rho$ in general$^\dag$ & proof technique and section\\
    \hline
    Thm.~\ref{t:m}
    & $\frac{\pi}{4}\bt(1-\al)$
    & $\frac{\pi}{4}[\min(\alpha,\beta)- \alpha\beta]$
    & periodized Gaussian trial (Sec.~\ref{s:m})
    \\
    Thm.~\ref{t:kb}
    & $\frac{\pi}{2}\bt(1-\al)$
    & $\frac{\pi}{2}[\min(\alpha,\beta)- \alpha\beta]$
    & periodized Kaiser--Bessel trial (Sec.~\ref{s:kb})
    \\
    Thm.~\ref{t:corner}
    & $\bt\log\frac{4}{e\pi\al}$
    & $\min(\alpha,\beta)\log \frac{4}{e\pi\max(\alpha,\beta)}$
    & low-rank $e^{ixt}$ kernel approx.\ (Sec.~\ref{s:corner})
    \\
    \hline
  \end{tabular}
  \vspace{0ex}
  \ca{Summary of exponential growth rates $\rho=\rho(\al,\bt)$ of the
    lower bound $e^{\rho N}$ on the condition number of a $p\times q$
    contiguous submatrix of the $N\times N$ Fourier matrix,
    as a function of fixed shape parameters
    $\al:=p/N$ and $\bt:=q/N$, asymptotically as $N\to \infty$.
    We drop algebraic prefactors,
    and drop $\bigO(1)$ changes in $p, q$, since we assume $p,q\gg 1$.
    Each row summarizes a different theorem proven in this paper.
    The second column presents the simpler ``non-fat'' $q\le p$ submatrix case,
    and the third column the general case.
    The $\dag$ is a reminder that Theorem~\ref{t:corner} only applies for
    $\al,\bt < 4/e\pi\approx 0.468$.
  }{t:sum}
\end{table}  

In summary, Table~\ref{t:sum} compares the exponential rates $\rho$
in these three theorems, dropping the algebraic prefactors.
Remark~\ref{r:prolrate} below shows that Theorem~\ref{t:kb}
tends to have a sharp rate at $(1,0)$ and $(0,1)$.
Figure~\ref{f:rates} compares the rates from Theorem~\ref{t:kb}
(see panels (b,e)) and Theorem~\ref{t:corner} (panel (e))
against the {\em empirical} numerical growth rate
of $\cond(A)$, denoted by $\tilde\rho(\al,\bt)$.
In short (see Section~\ref{s:num}),
the lower bound from the stronger of these two theorems
appears to be within a factor of 2 of the empirical rate uniformly
over shape space $(0,1)^2$.

\subsection{Relation to prior work}

Here we compare our findings to the few existing
lower bounds on the condition number.
At the end of this section we discuss an asymptotic connection
to eigenvalues of the prolate matrix.
We also note that there have been a couple of small-scale numerical
studies
\cite[Fig.~13]{adcock14}
\cite[Table~4]{pan16}
\cite[Fig.~4(a)]{liliao18}.

The foundational work of Edelman--McCorquodale--Toledo \cite[Thm.~3]{edelman99}
showed that square Fourier
submatrices $A$ with $\alpha=\beta=1/n$, $n=2,3,\dots$,
have an $\epsilon$-rank of $\alpha^2 N$, asymptotically
as $\alpha N\to\infty$.
However, their analysis did not access $\cond(A)$.
Recently Zhu et al.~\cite[Cor.~1]{zhu17}
gave a more refined $\epsilon$ dependence
of the asymptotic distribution of singular values of such an $A$.
They show that $\sigma_j(A) \le \epsilon$
for $j \ge j_\epsilon$,
with a formula for
$j_\epsilon$ that is at least%
\footnote{Here for simplicity we ignore their nonnegative second term
in $R(L,M,\epsilon)$.}
$\alpha^2 N + (\frac{4}{\pi^2} \log 8\alpha N + 6)\log(16/\epsilon^2)$.
To convert this to a lower bound on condition number
one equates $j_\epsilon$ to $\alpha N$, the submatrix size, then
solves for $\epsilon$.
Simplifying somewhat, the resulting lower bound cannot be stronger than
$$
\cond(A) \sim \epsilon^{-1} \gtrsim \exp \biggl[
\frac{\alpha(1-\alpha)}{(8/\pi^2)\log 8\alpha N + 12} N \biggr]~,
$$
which is super-algebraic, but falls short of $e^{\rho N}$ for any positive $\rho$.

To our knowledge, the chief prior exponential lower bounds on $\cond(A)$
are the following three.

1) Pan \cite[Thm.~6.2]{pan16} proves that for $p=q=N/2$,
$\cond(A) \ge \sqrt{N}2^{N/4 -1}$, \ie $\rho = (\log 2)/4 \approx 0.173$.
We see that both Theorems~\ref{t:m} and \ref{t:kb} are stronger than
this, the latter giving $\rho = \pi/8 \approx 0.393$ at $\al=\bt=1/2$.

2) Moitra \cite[Thm.~3.1]{moitra} gives the only proof of which we are aware
that the condition number of a submatrix of {\em general} shape
grows at least exponentially, although no rate is given.
His method is similar to that of our Theorem~\ref{t:kb}---although we found it independently---but with trial vector $\mbf{v}$ chosen
as a high power of the Fej\'er kernel.
By tracking the rate in his proof%
\footnote{Note that his short proof is unclear about whether the width
  $\ell$ and power $r$ may be non-integer-valued, which would
  be needed to make claims for almost all $p$ and $q$, as we do.}
we get, in our notation,
$$
\rho_\tbox{Moitra} \;=\; (\log \sqrt{2}) \bt(1-\al)~, \qquad \bt\le\al~.
$$
This has a similar form as our first two theorems (see Table~\ref{t:sum}),
but Theorem~\ref{t:kb}
improves upon it by a factor of about 4.5, for all $(\al,\bt)$.

3)
In the super-resolution literature,
lower bounds on the condition number have been proven in the case of fixed $q$,
and $\al\ll 1$. They take the general form
$\cond(A) \ge (c\al)^{-q+1} \propto e^{\bt N \log (1/c\al)}$,
similar to the last row of our Table~\ref{t:sum}.
The strongest such result that we know of
is that of Li--Liao \cite{liliao18} (see also \cite[Ex.~5.1]{kunis19}).
In our notation, \cite[Prop.~3]{liliao18} states
\be
\sigma_\tbox{min}(A) \;\le\;
\biggl(\begin{array}{c}2q-2 \\ q-1\end{array}\biggr)^{-1/2}
  2\sqrt{p} (2\pi\alpha)^{q-1}
  \;\le\; \sqrt{8pq}(\pi\alpha)^{q-1} ~,
  \label{liliao}
\ee
  where in the second form we bounded the central binomial coefficient.
  Thus their constant is $c=\pi$.
  Their proof exploits the $q$th-order finite difference trial vector
  $v_j = (-1)^j (q-1)!/(q-j)!(j-1)!$,
  $j=1,\dots,q$, whose first $q$ moments vanish,
  following Donoho \cite[\S 7.4]{donoho92}. 
  The result \eqref{liliao} has restrictive conditions:
  $\al \le 1/(C(q)\sqrt{p})$, where one may check that
  $C(q) \sim 4^q$. Thus \eqref{liliao}
  does not apply for fixed $q$ and $\al$ as $N\to\infty$.
  In constrast, our Theorem~\ref{t:corner}
  applies to all submatrix sizes up to $0.468 N$,
  and furthermore its rate constant $c$ is $4/e\approx 1.47$ times stronger.

\begin{rmk}[The prolate matrix and sharpness]\label{r:prolrate} 
  There is a limit in which the singular values of $A$ are already
  well understood \cite{edelman99,adcock14,zhu17,batenkov20}.
  When $N\to\infty$ with $\al$ and $q$ held constant (so that
  the height of $A$ is much larger than its width),
  a Riemann sum shows that $A^\ast A$ tends to a multiple of Slepian's
  {\em prolate matrix} \cite{slepianV,varah93}.
  The latter is $P(q,\al/2)$ in standard
  notation, with elements $(P(q,\al/2))_{jk} = \al \sinc (\pi \al(j-k))$,
  $j,k =1,\dots,q$,
  recalling that $\sinc x := (\sin x)/x$ for $x\neq0$ and 1 otherwise.
  Thus, in this limit, the
  singular values of $A$ are the square-roots of the eigenvalues
  of $P(q,\al/2)$. Slepian proved the $q\to\infty$ asymptotic
  for the smallest such eigenvalue \cite[Eqs.~(13), (58)]{slepianV},
  $$
  \lambda_0\big( P(q,\al/2) \big) \;=\;
  1-\lambda_{q-1}\big( P(q,(1-\al)/2)\big)
  \;\sim\;
  2^{9/4}\sqrt{\pi q}\frac{(1-\cos \pi \al)^{1/4}}{(1+\cos \pi \al)^{1/2}}
  \,\theta^{-q}
  $$
  where $\theta = \cot^2(\pi\al/4) \in (1,\infty)$ \cite[\S 3.2]{adcock14}.
  Recalling that $q=\beta N$, the term $\theta^{-q}$ predicts an exponential
  growth rate for $\cond(A)$ of
  \be
  \rho_\tbox{\rm prolate} \;:=\; \bt \log \cot \frac{\pi\al}{4}~,
  \qquad \bt\ll\al~,
  \label{prolrate}
  \ee
  which should be compared with our results in the second column of
  Table~\ref{t:sum}.

  In particular, the first term in the Taylor expansion of \eqref{prolrate}
  about $\al=1$ is precisely our $\frac{\pi}{2}\bt(1-\al)$, proving that
  as $\al\to1^-$ and $\bt\ll 1$, the rate in Theorem~\ref{t:kb} is sharp.
  Instead taking $\al\to0^+$ gives $\bt \log(4/\pi\al)$, indicating
  that the rate of Theorem~\ref{t:corner} fails to be sharp by an amount $\bt$.
  We have also checked that for $\bt\ll\al$ the empirical rates
  shown in Figure~\ref{f:rates}(a) match \eqref{prolrate} very well:
  the agreement is within 0.05 uniformly over $0<\al<1$, $0<\bt<\al/3$.
\end{rmk}   

We emphasize that,
in contrast to the above prolate asymptotics which fix $q$ and $\al$ with $N \to\infty$,
our Theorems~\ref{t:m}--\ref{t:corner} are {\em not} asymptotic: they apply to arbitrary $N$ and essentially arbitrary $p$ and $q$.

\subsection{Overview of proof methods}
\label{s:pf}

We now outline the tools used to prove the three main theorems.
The definition of matrix condition number is
\be
\cond(A) := \frac{\sigma_1(A)}{\sigma_\tbox{min}(A)}
~,
\label{conddef}
\ee
where $\sigma_1 \ge \sigma_2 \ge \dots \ge
\sigma_{\min(p,q)} =: \sigma_\tbox{min}$ are the singular values of $A$.
All three proofs place exponentially small upper bounds on $\sigma_\tbox{min}(A)$,
then combine this with the following simple bound.
\begin{pro}\label{p:s1}   
  The largest singular value of any $p\times q$ submatrix of the Fourier
  matrix \eqref{F} obeys
  \be
  \sigma_1(A) \ge \sqrt{p}~.
  \ee
\end{pro}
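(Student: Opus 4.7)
The plan is to bound $\sigma_1(A)$ from below by the norm of a single column of $A$, exploiting the fact that every entry of the Fourier matrix \eqref{F} has unit modulus.

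First I would recall the variational characterization
\[
\sigma_1(A) \;=\; \|A\|_2 \;=\; \sup_{\mbf{x}\in\C^q,\;\mbf{x}\neq 0} \frac{\|A\mbf{x}\|_2}{\|\mbf{x}\|_2}~,
\]
so that for any unit vector $\mbf{x}\in\C^q$ one has $\sigma_1(A)\ge\|A\mbf{x}\|_2$. Taking $\mbf{x}=\mbf{e}_k$, the $k$-th standard basis vector, gives $\sigma_1(A)\ge\|A\mbf{e}_k\|_2$, which is simply the Euclidean norm of the $k$-th column of $A$.

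Next I would observe that, because $A$ is a submatrix of $F$ and $|F_{jk}|=1$ for every $j,k$, each column of $A$ is a vector in $\C^p$ whose $p$ entries all have modulus $1$. Hence $\|A\mbf{e}_k\|_2=\sqrt{p}$ for every $k\in\{1,\dots,q\}$, and the claimed bound $\sigma_1(A)\ge\sqrt{p}$ follows immediately. (As an aside, the same argument applied to $A^\ast$ yields the symmetric bound $\sigma_1(A)\ge\sqrt{q}$, so one could actually write $\sigma_1(A)\ge\sqrt{\max(p,q)}$ if desired.)

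There is no real obstacle here; the result is essentially a one-line consequence of the unit-modulus property of $F$. The only conceivable pitfall is being careful that ``cyclically contiguous'' still means one is selecting actual rows and columns of $F$ (so the entries remain of unit modulus), which is immediate from definition \eqref{F}.
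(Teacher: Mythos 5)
Your proof is correct, but it takes a different (and arguably more direct) route than the paper. The paper bounds $\sigma_1(A) = \|A\|$ from below via the Frobenius norm: since every entry of $A$ has unit modulus, $\|A\|_F = \sqrt{pq}$, and the rank inequality $\|A\|_F \le \sqrt{q}\,\|A\|$ then gives $\|A\| \ge \sqrt{p}$. You instead pick out a single column of $A$ and use the variational characterization $\sigma_1(A) \ge \|A\mbf{e}_k\|_2 = \sqrt{p}$, relying only on the fact that the operator norm dominates the norm of any column. Both are valid one-liners driven by the unit-modulus property of $F$; yours avoids invoking the rank--Frobenius inequality and is perhaps slightly more self-contained. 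Your aside that the same argument on $A^\ast$ yields $\sigma_1(A) \ge \sqrt{\max(p,q)}$ is also correct, though the paper has no need of the sharper form since Theorems~\ref{t:m} and \ref{t:kb} are stated under $q \le p$.
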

\begin{proof}
  The operator norm of $A$ is bounded by the Frobenius norm
  \cite[(2.3.7)]{golubvanloan} giving,
  $\sqrt{pq} = \|A\|_F \le \sqrt{q}\|A\|$.
  The result follows since $\sigma_1(A) = \|A\|$.
\end{proof}

Theorems~\ref{t:m} and \ref{t:kb} will use the variational bound on
$\sigma_\tbox{min}$.
Namely, if $q\le p$, then
\be
\sigma_\tbox{min}(A) = \sigma_q \le \frac{\|A\mbf{v}\|_2}{\|\mbf{v}\|_2}~,
\qquad\mbox{ for any } \mbf{v}\in\C^q,  \mbf{v}\neq\mbf{0}~.
\label{mingro}
\ee
If instead $q>p$ (a ``fat'' submatrix), \eqref{mingro} no longer holds,
explaining the hypothesis $q\le p$ in Theorems~\ref{t:m} and \ref{t:kb}.

\bfi[t] 
\centering
\ig{width=5.2in}{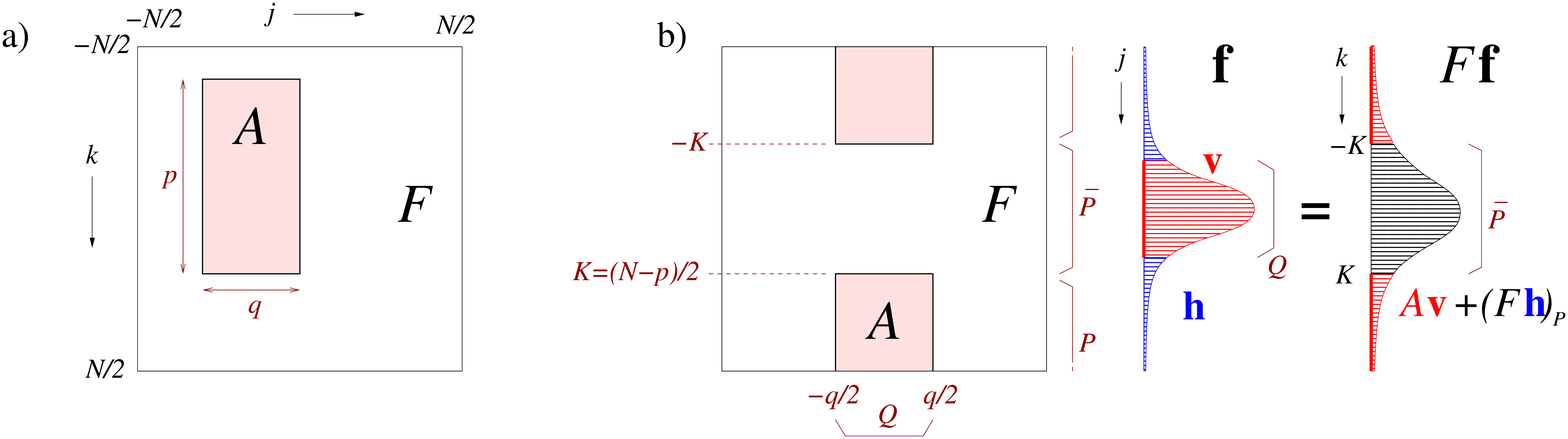}
\ca{The main proof idea for Theorems~\ref{t:m} and \ref{t:kb}.
  a) shows the submatrix $A$ (pink) within the $N\times N$ Fourier matrix $F$.
  b) shows the cyclically shifted $A$ (pink) again within $F$,
  and the formation of the DFT matrix-vector product $F\mbf{f}$.
  The vector $\mbf{f}$ is exponentially small outside of the central
  index set $Q$, and has a known DFT which is
  exponentially small in the high frequency index set $P$.
  The length-$q$ vector (shown in red) is $\mbf{v} = \mbf{f}|_Q$, i.e.,
  restricted to the ``input'' indices.
  $F\mbf{f}$ in the ``output'' index set $P$ equals $A\mbf{v}$
  plus a correction $(F\mbf{h})|_P$, where $\mbf{h}$ (shown blue)
  is the part of $\mbf{f}$ outside of $Q$.
  For Theorem~\ref{t:m}, $\mbf{f}$ samples a 
  periodized Gaussian (as sketched); for
  Theorem~\ref{t:kb} it samples a periodized ``deplinthed'' Kaiser--Bessel
  function (see Figure~\ref{f:dkb}; in this case $\mbf{h}=\mbf{0}$).
}{f:idea}
\efi

Our trick to construct a trial vector $\mbf{v}$ for which $A\mbf{v}$ is
nearly (or exactly) known is by {\em embedding} this matrix-vector product
in the larger one, $F\mbf{f}$, for some $\mbf{f}\in\C^N$.
Firstly we note that cyclic horizontal or vertical
translations of the submatrix location within
$F$ do not affect its singular values. This can
be seen by noting that a cyclic right-translation by $m$ of a submatrix $A$
is equivalent to left-multiplication of $A$ by the
diagonal unitary matrix with diagonal entries $\{e^{2\pi i j m}\}_{j\in J}$,
where $J$ is the set of row indices of $A$
(this has been pointed out, e.g., in \cite[Sec.~III.C]{zhu17}).
Thus we translate the index sets of $A$ to be the $p$ highest magnitude
output frequencies (see $P$ defined by \eqref{P}), and the $q$ lowest-magnitude
inputs $Q := \{j\in\Z: -q/2\le j < q/2\}$; see Figure~\ref{f:idea}.
The trial vector $\mbf{v}$ will then be $\mbf{f}|_Q$, the restriction
of $\mbf{f}$ to the central index set $Q$.
The desired $A\mbf{v}$ is then $(F\mbf{f})|_P$, plus a
correction of size at most the norm of $\mbf{f}$ outside $Q$, which
is arranged to be small or zero.

Thus we seek an $\mbf{f}$ that is exponentially small
outside of $Q$, whose DFT, $F\mbf{f}$, is known and exponentially small
in $P$.
We build this from a known Fourier transform pair $f(t)$ and
$\hat f(\omega)$ on $\R$, by applying the
phased variant of the Poisson summation formula
\cite[Thm.~3.1]{steinfourier}
\be
\sum_{j\in\Z} e^{2\pi i j \omega} f(j) = \sum_{m\in\Z} \hat f(\omega+m)~.
\label{PSF}
\ee
Setting $\omega=k/N$ makes the left-hand side equal to the $k$th entry of
$F\mbf{f}$, for a vector $\mbf{f}$ whose $j$th entry
is $f_j = \sum_{n\in\Z} f(j+nN)$,
a sample of the $N$-periodization of the original function $f$.

In Theorem~\ref{t:m} we choose the Gaussian
Fourier pair, which is of course well localized
both in position and frequency space.
It also has monotonic tails, allowing sums to be bounded by simple integrals.
However, the problem of {\em optimal} (in the $L^2(\R)$ sense)
simultaneous position- and frequency-localization was solved
by Slepian and co-workers in the form of the
prolate spheroidal wavefunctions (PSWFs) of order zero,
in particular the first such function $\psi_0$
\cite{slepianI,slepianIII,osipov}.
Thus one might hope that by choosing $f$
as a scaled truncated $\psi_0$,
making $\hat f$ a scaled (but not truncated) $\psi_0$,
one could beat the Gaussian rate.
Yet, we were unable to find estimates on the tails of $\psi_0$
that enable bounding the right-hand side sum in \eqref{PSF}.
Instead, Theorem~\ref{t:kb} relies on the so-called Kaiser--Bessel (KB)
Fourier transform pair \cite{kaiser,fourmont}
defined by \eqref{KBpair},
which has the same optimal exponential rate of localization as the PSWF
(see, e.g., \cite[Sec.~5]{keranal}).
The non-compactly-supported member of this pair
involves only sinc and other elementary functions,
allowing a bound on its tail sum.
Since the tail is {\em oscillatory} rather than monotonic
(see Figure~\ref{f:dkb}),
the bound is involved, although elementary.
Here we draw on the thesis of Fourmont
\cite{fourmontthesis};
he was concerned with error of the nonuniform
FFT \cite{fourmont,usingnfft,keranal} where the criteria for
a good spreading kernel are similar to those for $f$.
We suspect that this work is the first to exploit the KB pair
as an {\em analysis} (as opposed to numerical) tool.

In contrast, to prove Theorem~\ref{t:corner},
$\sigma_\tbox{min}$ is bounded from above
via the SVD rank approximation theorem\footnote{This is often referred to as the 
Ekhart--Young theorem, although it is due to Schmidt.}
\cite[Thm.~2.5.3]{golubvanloan}.
Namely, $\sigma_q$ is bounded from above
by the error of any rank-$(q-1)$ approximation of $A$, in the operator norm.
A rapidly-convergent approximation comes by
sampling a carefully chosen continuous kernel expansion of the form
$$
e^{ixt} = \sum_{n=1}^\infty f_n(x) g_n(t)
$$
on regular grids in $x$ and $t$, so that its samples give the elements of $A$.
This sampling idea has been used by O'Neil--Rokhlin to
bound the numerical rank of $A$ \cite[Cor.~3.4]{oneil07}.

The structure of the rest of this paper is as follows.
The next three sections correspond to the three main theorems:
Section~\ref{s:m} proves the elementary Gaussian rate,
Section~\ref{s:kb} proves the doubled rate based on the Kaiser--Bessel pair,
then Section~\ref{s:corner} proves the further improved rate in the
corner region of the $(\alpha,\beta)$ plane.
Finally, Section~\ref{s:disc}
has a detailed numerical comparison to the empirical growth rate,
discusses symmetries (both exact and near) in the $(\al,\bt)$ plane,
and draws some conclusions.
A short Appendix proves the Kaiser--Bessel transform pair.

\section{Proof of elementary Gaussian rate (Theorem~\ref{t:m})}
\label{s:m}

We start with a simple identity that the DFT of a
periodized discrete Gaussian is another periodized discrete Gaussian.

\begin{pro}  
  Let $N\in\N$, $\sigma>0$, and
  define the $N$-periodic vector $\mbf{f}\in\C^N$ by its entries
  \be
  f_j = \sum_{n\in\Z} e^{-\half (j+nN)^2/\sigma^2}~,
  \qquad -N/2\le j < N/2~.
  \label{fj}
 \ee
 Then the $k$th component of the DFT of $\mbf{f}$ is
 \be
 (F\mbf{f})_k = \sqrt{2\pi} \sigma \sum_{m\in\Z} 
 e^{-2(\pi \sigma/N)^2(k+mN)^2}~,
 \qquad   -N/2\le k < N/2~.
 \label{Ff}
 \ee
\end{pro}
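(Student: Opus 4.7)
The plan is to obtain this identity as a direct application of the phased Poisson summation formula \eqref{PSF} to the Gaussian Fourier transform pair. Specifically, I would take $f(t) = e^{-t^2/(2\sigma^2)}$, whose Fourier transform (in the convention $\hat f(\omega) = \int_\R f(t) e^{-2\pi i \omega t}\,dt$ used by \eqref{PSF}) is the well-known Gaussian $\hat f(\omega) = \sqrt{2\pi}\,\sigma\, e^{-2\pi^2 \sigma^2 \omega^2}$. Both $f$ and $\hat f$ decay super-exponentially, so all sums converge absolutely and no technical issues arise.

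Next I would verify that substituting $\omega = k/N$ in the left-hand side of \eqref{PSF} reproduces the $k$th DFT entry of the periodized sample vector. Splitting the sum over $j \in \Z$ by writing $j = j' + nN$ with $-N/2 \le j' < N/2$ and $n\in\Z$, the phase factor $e^{2\pi i j k/N}$ is $N$-periodic in $j$ and so equals $e^{2\pi i j' k/N}$. Interchanging the two sums (justified by absolute convergence) gives
\be
\sum_{j\in\Z} e^{2\pi i j k/N} f(j) \;=\; \sum_{-N/2\le j'<N/2} e^{2\pi i j' k/N} \sum_{n\in\Z} f(j'+nN) \;=\; \sum_{-N/2\le j'<N/2} e^{2\pi i j' k/N} f_{j'} \;=\; (F\mbf{f})_k,
\ee
using the definition \eqref{fj} of $f_{j'}$ and the DFT convention \eqref{F}.

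Finally, the right-hand side of \eqref{PSF} at $\omega = k/N$ is
\be
\sum_{m\in\Z} \hat f(k/N + m) \;=\; \sqrt{2\pi}\,\sigma \sum_{m\in\Z} e^{-2\pi^2 \sigma^2 (k/N + m)^2} \;=\; \sqrt{2\pi}\,\sigma \sum_{m\in\Z} e^{-2(\pi\sigma/N)^2 (k+mN)^2},
\ee
which is exactly \eqref{Ff}. The only non-trivial ingredient is the Gaussian Fourier pair itself, which is standard, and the Poisson summation identity \eqref{PSF}, which the paper already invokes from \cite[Thm.~3.1]{steinfourier}. There is no real obstacle here: the proof is essentially bookkeeping, with the one delicate point being the rearrangement of the doubly-infinite sum on the left-hand side into the outer DFT over a period and the inner periodization sum that defines $\mbf{f}$.
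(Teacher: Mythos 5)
Your proof is correct and takes essentially the same route as the paper: apply the phased Poisson summation formula \eqref{PSF} to the Gaussian transform pair, set $\omega=k/N$, and regroup the left-hand sum modulo $N$ (writing $j=j'+nN$) to recognize $(F\mbf{f})_k$. One small slip: the Fourier convention attached to \eqref{PSF} carries a $+$ sign, $\hat f(\omega)=\int_\R f(t)\,e^{2\pi i\omega t}\,dt$ as in the paper's \eqref{ft}, not the $-$ sign you wrote; since the Gaussian is even this is immaterial here, but worth keeping straight.
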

\begin{proof}
  With the (``number theorist'') Fourier transform convention
  \be
  \hat f(\omega) := \int_\R f(t) e^{2\pi i \omega t} dt~,
  \qquad \omega\in\R
  \label{ft}
  \ee
  we have the usual continuous Fourier transform pair
  \be
  f(t) = e^{-\half (t/\sigma)^2}   \qquad \longleftrightarrow \qquad
  \hat f(\omega) = \sqrt{2\pi} \sigma e^{-2(\pi\sigma\omega)^2}~.
  \label{pair}
  \ee
  Inserting this $f$ into the Poisson summation formula \eqref{PSF}
  and setting $\omega = k/N$ gives the discrete sums
  $$
  \sum_{j\in\Z} e^{2\pi i jk/N} e^{-\half (j/\sigma)^2} =
  \sqrt{2\pi} \sigma \sum_{m\in\Z} e^{-2(\pi\sigma)^2 (k/N + m)^2} ~,
  \qquad \forall k \in \N~.
  $$
  By grouping terms with the same index modulo $N$,
  and using \eqref{F} and \eqref{fj},
  the left-hand side can be seen to be $(F\mbf{f})_k$.  
\end{proof}

\begin{pro}   
  \eqref{fj} and \eqref{Ff} may be bounded uniformly over
  their centered index ranges
  by (non-periodic) Gaussians, namely
  \be
  f_j \;\le\; \left(2+ \sqrt{2\pi}\frac{\sigma}{N}\right) e^{-\half (j/\sigma)^2}~,
    \qquad -N/2\le j < N/2~.
  \label{fjub}
  \ee
  \be
  (F\mbf{f})_k \;\le\; (\sqrt{8\pi}\sigma + 1)
  e^{-2 (\pi\sigma k/N)^2}~,
  \qquad -N/2\le k < N/2~.
  \label{Ffub}
  \ee
\end{pro}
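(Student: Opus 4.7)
The plan is to prove \eqref{fjub} first, then deduce \eqref{Ffub} by rescaling, since the right-hand side of \eqref{Ff} has exactly the periodized-Gaussian form of \eqref{fj} with a different width parameter. By the even symmetry $f_{-j}=f_j$ (and periodicity to cover $j=-N/2$), I may assume $0\le j<N/2$. Writing $g(n):=e^{-(j+nN)^2/(2\sigma^2)}$, I would split $f_j = g(0) + g(-1) + R$, where $R := \sum_{n\ge 1} g(n) + \sum_{n\le -2} g(n)$. The two ``main'' terms evaluate to $e^{-j^2/(2\sigma^2)} + e^{-(N-j)^2/(2\sigma^2)}$; since $N-j\ge j$ in the assumed range, each is at most $e^{-j^2/(2\sigma^2)}$, which accounts for the $2$ in \eqref{fjub}.

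For the remainder $R$, I would exploit monotonicity of the Gaussian density. For $n\ge 1$, the sequence $g(n)$ is decreasing in $n$ (since $j+nN\ge 0$), so $g(n)\le \frac{1}{N}\int_{j+(n-1)N}^{j+nN}e^{-t^2/(2\sigma^2)}\,dt$, which telescopes to $\sum_{n\ge 1} g(n) \le \frac{1}{N}\int_{j}^\infty e^{-t^2/(2\sigma^2)}\,dt$. Analogously, for $n\le -2$ each interval $[j+nN, j+(n+1)N]$ lies in $(-\infty,-N/2]$ where the density is increasing, giving $\sum_{n\le -2}g(n) \le \frac{1}{N}\int_{-\infty}^{j-N}e^{-t^2/(2\sigma^2)}\,dt$. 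Since $j\le N/2$ implies $j-N\le -j$, the second integral is at most $\int_{j}^\infty e^{-t^2/(2\sigma^2)}\,dt$ by Gaussian symmetry, so $R \le \frac{2}{N}\int_{j}^\infty e^{-t^2/(2\sigma^2)}\,dt$.

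The main obstacle is extracting the multiplicative factor $e^{-j^2/(2\sigma^2)}$ from this integral rather than leaving $R$ merely constant (which would fail to give \eqref{fjub} when $|j|$ is comparable to $\sigma$). For this I would use a Mills-ratio-type estimate: substituting $t=j+s$ with $s\ge 0$,
$$
\int_{j}^\infty e^{-t^2/(2\sigma^2)}\,dt \;=\; e^{-j^2/(2\sigma^2)}\int_0^\infty e^{-(2js+s^2)/(2\sigma^2)}\,ds \;\le\; \sqrt{\pi/2}\,\sigma\,e^{-j^2/(2\sigma^2)},
$$
where the inequality drops the nonnegative cross term $2js$. Combining with the previous step yields $R \le \frac{\sqrt{2\pi}\sigma}{N}e^{-j^2/(2\sigma^2)}$; adding to the main terms produces \eqref{fjub}.

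Finally, observing that the right-hand side of \eqref{Ff} is exactly $\sqrt{2\pi}\sigma$ times a periodized Gaussian of the form \eqref{fj} in the index $k$ with $\sigma$ replaced by the effective width $\tilde\sigma:=N/(2\pi\sigma)$, I would simply apply \eqref{fjub} with this substitution and multiply by the prefactor $\sqrt{2\pi}\sigma$. This gives $(F\mbf{f})_k \le \sqrt{2\pi}\sigma\bigl(2+\sqrt{2\pi}\tilde\sigma/N\bigr)e^{-k^2/(2\tilde\sigma^2)} = (\sqrt{8\pi}\sigma+1)\,e^{-2(\pi\sigma k/N)^2}$, which is \eqref{Ffub}.
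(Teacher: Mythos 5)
Your proof is correct and arrives at exactly the paper's constants; the spirit is the same (bound the periodized-Gaussian tail by integrals using monotonicity), but the decomposition is genuinely different. The paper splits the sum in \eqref{fj} by the sign of $n$, drops the cross term $2jnN$ in the exponent \emph{before} summing to factor out $e^{-\half(j/\sigma)^2}$, and then bounds $\sum_{n\ge0}e^{-\half(nN/\sigma)^2}$ by $1+\int_0^\infty$; you instead split off the two largest terms $n=0,-1$, compare the remaining tail termwise to a single Gaussian integral $\int_j^\infty e^{-t^2/2\sigma^2}\,dt$, and extract the Gaussian envelope afterwards via the Mills-ratio step (which is, in effect, dropping the same cross term after integration). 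Both are equally elementary and give $2+\sqrt{2\pi}\sigma/N$. For \eqref{Ffub} the paper merely says ``by an identical method,'' whereas you make that statement precise with a clean rescaling $\sigma\mapsto\tilde\sigma=N/2\pi\sigma$ and a prefactor $\sqrt{2\pi}\sigma$, so that \eqref{Ffub} follows directly from \eqref{fjub}; that observation is a small but genuine improvement in transparency.
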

\begin{proof}
  Assume $j\ge 0$, then we may drop the cross-term in the square,
  then exploit monotonicity to
  bound a Gaussian sum by its integral, to get
  \bea
  \sum_{n\ge 0} e^{-\half (j+nN)^2/\sigma^2} &\le&
  e^{-\half (j/\sigma)^2} \sum_{n\ge 0} e^{-\half (nN/\sigma)^2}
  \nonumber \\
  &\le&
  e^{-\half (j/\sigma)^2} \left(1 + \int_0^\infty e^{-\half (tN/\sigma)^2} dt \right)
  \le
  e^{-\half (j/\sigma)^2} \left(1 + \sqrt{\frac{\pi}{2}}\frac{\sigma}{N} \right)~.
  \nonumber
  \eea
  The negative terms are handled by shifting the sum
  $$
  \sum_{n<0} e^{-\half (j+nN)^2/\sigma^2} = \sum_{n\ge0} e^{-\half (N-j + nN)^2/\sigma^2}~,
  $$
  which, since $j\le N/2$, is termwise no larger than the non-negative sum,
  giving an overall factor of two, hence \eqref{fjub}.
  The $j<0$ case is handled by noting $f_{-j} = f_j$.
  \eqref{Ffub} follows by an identical method.
\end{proof}

\begin{lem}   
  Let $A$ be a cyclically contiguous $p\times q$ submatrix of the
  $N\times N$ discrete Fourier matrix $F$, with $p\ge q$ (i.e.,
  the submatrix is either square or ``tall''), $q>2$, and $p<N-2$.
  Then, in terms of $\pbar$ and $\qbar$
  in Theorem~\ref{t:m}, the smallest singular value of $A$ obeys
  \be
  \sigma_\tbox{min}(A)
  \;\le\; 6 \biggl(\frac{\qbar}{1-\pbar/N}\biggr)^{1/4} \sqrt{N}
  \cdot e^{-\frac{\pi}{4} \qbar(1-\pbar/N)}~.
  \label{sigUB}
  \ee
\label{l:m}
\end{lem}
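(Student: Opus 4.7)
The plan is to apply the variational bound \eqref{mingro} with a trial vector implementing the embedding strategy of Figure~\ref{f:idea}. Since cyclic row or column translations act on $A$ by unitary conjugations and so preserve singular values, I assume without loss of generality that the column index set $Q$ is centered on $0$ and the row index set $P$ sits in the high-frequency band $|k|\ge (N-\pbar)/2$; the hypothesis that $\pbar$ has the parity of $N$ makes $(N-\pbar)/2$ an integer so that this placement is exactly symmetric about $\pm N/2$, and the replacement of $p,q$ by $\pbar,\qbar$ costs only a constant prefactor. The trial vector is $\mbf{v}:=\mbf{f}|_Q$, where $\mbf{f}$ is the periodized Gaussian \eqref{fj} with width $\si>0$ to be chosen.

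Writing $\mbf{h}:=\mbf{f}|_{Q^c}$ for the leakage outside $Q$, the decomposition $A\mbf{v} = (F\mbf{f})|_P - F_{P,Q^c}\mbf{h}$ together with the triangle inequality and the operator-norm bound $\|F_{P,Q^c}\|\le\sqrt{N}$ (from orthonormality of rows of $F/\sqrt N$) gives
$$
\|A\mbf{v}\|_2 \;\le\; \|(F\mbf{f})|_P\|_2 + \sqrt{N}\,\|\mbf{h}\|_2~.
$$
I then control each term by summing the pointwise Gaussian upper bounds \eqref{Ffub} and \eqref{fjub} over the tail ranges $|k|\ge (N-\pbar)/2$ and $|j|\ge \qbar/2$; the standard Gaussian integral estimate $\int_x^\infty e^{-u^2}du \le e^{-x^2}/(2x)$ (or a geometric-series bound) converts each tail sum into the corresponding exponential. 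Choosing $\si^2 = \qbar/(2\pi(1-\pbar/N))$ equates the two decay exponents, producing a common factor $e^{-\rho}$ with $\rho=\frac{\pi}{4}\qbar(1-\pbar/N)$. A lower bound $\|\mbf{v}\|_2 \gtrsim (\qbar/(1-\pbar/N))^{1/4}$, obtained by restricting the sum $\sum_{j\in Q}f_j^2$ to indices with $|j|<\si$, on which the leading term of the periodized sum already satisfies $f_j\ge e^{-j^2/(2\si^2)}\ge e^{-1/2}$, yields $\sigma_\tbox{min}\le\|A\mbf{v}\|_2/\|\mbf{v}\|_2$ of precisely the form in \eqref{sigUB}.

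The main obstacle is bookkeeping rather than conceptual: to reach the explicit constant $6$ in \eqref{sigUB} one must combine the prefactors $C_1=2+\sqrt{2\pi}\si/N$ and $C_2=\sqrt{8\pi}\si+1$ from the preceding propositions with both tail-integral estimates and the discrete lower bound on $\|\mbf{v}\|_2$, taking care of the regimes in which $\qbar/\si$ or $N/\qbar$ are not large. The hypotheses $q>2$ and $p<N-2$ (equivalently $\qbar\ge 2$ and $\pbar<N$) are precisely what guarantee that the $\sqrt{\si}$-style lower bound on $\|\mbf{v}\|_2$ and the one-sided tail bound on the leakage $\|\mbf{h}\|_2$ both hold with uniform constants independent of $N,p,q$.
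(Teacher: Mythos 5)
Your strategy — variational bound, periodized Gaussian trial vector, decomposition $\mbf{f}=\mbf{g}+\mbf{h}$, tail sums bounded by integrals, and balancing the two exponents via $\si^2=\qbar/(2\pi(1-\pbar/N))$ — matches the paper's proof step for step. The one place you diverge is the lower bound on $\|\mbf{v}\|_2$, and it creates both a gap and an internal inconsistency.

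The paper uses the trivial bound $\|\mbf{v}\|_2 \ge f_0 \ge 1$ (just keep the $j=n=0$ term in \eqref{fj}). You instead claim $\|\mbf{v}\|_2 \gtrsim \sigma^{1/2} \asymp (\qbar/(1-\pbar/N))^{1/4}$ by counting roughly $2\sigma$ indices with $|j|<\sigma$ on which $f_j \ge e^{-1/2}$. This fails in two regimes allowed by the lemma's hypotheses. First, if $\sigma > q/2$ (which happens whenever $(1-\pbar/N)\qbar < 2/\pi$, e.g.\ $p$ close to $N$ with $q$ small), the interval $|j|<\sigma$ spills out of $Q$ and the count is capped at $q$, not $2\sigma$. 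Second, if $\sigma < 1$ (which the hypotheses permit: $q=3$, $\pbar=N-1$ gives $\sigma^2 = 2/(2\pi(1/N)) = N/\pi$, fine, but $q=3$, $\pbar/N$ small gives $\sigma^2 \approx 2/(2\pi) = 1/\pi$, so $\sigma\approx 0.56<1$), only $j=0$ survives, and then your estimate $\|\mbf{v}\|_2\ge e^{-1/2}\approx 0.61$ is actually weaker than the paper's $\|\mbf{v}\|_2\ge 1$. You note that bookkeeping is needed ``in the regimes in which $\qbar/\si$ or $N/\qbar$ are not large'' but do not supply it, and the hypotheses $q>2$, $p<N-2$ do not rescue you as you claim.

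Separately, if your $\|\mbf{v}\|_2 \gtrsim (\qbar/(1-\pbar/N))^{1/4}$ did hold, then dividing the upper bound $\|A\mbf{v}\|_2 \lesssim (\qbar/(1-\pbar/N))^{1/4}\sqrt{N}e^{-\frac{\pi}{4}\qbar(1-\pbar/N)}$ by it would \emph{cancel} the $(\qbar/(1-\pbar/N))^{1/4}$ prefactor, yielding $\sigma_\tbox{min}\lesssim \sqrt{N}e^{-\frac{\pi}{4}\qbar(1-\pbar/N)}$ rather than ``precisely the form in \eqref{sigUB}.'' That would be a strictly stronger statement than the lemma, but you would then need to check that the hidden constant does not exceed $6\cdot(\qbar/(1-\pbar/N))^{1/4}$, which is not obvious given the additional losses from your sharper $\|\mbf{v}\|_2$ estimate. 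The paper avoids the entire issue by settling for $\|\mbf{v}\|_2\ge1$, which is exactly what leaves the quarter-power prefactor in \eqref{sigUB}. To make your argument rigorous, the simplest fix is to adopt the same crude lower bound.
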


\begin{proof}
  As discussed in Section~\ref{s:pf},
  we translate $A$ to be centered vertically about the highest
  frequency and horizontally about the lowest; see Figure~\ref{f:idea}(b).
  We then apply \eqref{mingro}, with $\mbf{v}$ chosen as follows.
  We select the central $q$ elements of $\mbf{f}$,
  given by \eqref{fj}, with width parameter $\sigma>0$ to be specified later,
  i.e.,
  \be
  \mbf{v} := \{f_j\}_{-q/2\le j < q/2}~.
  \label{vf}
  \ee
  Let $\mbf{g}\in\C^N$ be $\mbf{v}$ embedded in the vector of length $N$,
  i.e.\ with elements $g_j = f_j$ for $-q/2\le j < q/2$,
  zero otherwise.
  The remainder we denote by $\mbf{h}\in\C^N$, that is,
  $h_j = 0$ for all $-q/2\le j < q/2$, and $h_j=f_j$ otherwise.
  In summary,
  $\mbf{f}=\mbf{g} + \mbf{h}$,
  with $\mbf{g}$ supported only in the central region $Q$
  while $\mbf{h}$ is supported only in its complement; see Figure~\ref{f:idea}(b).
  Then, letting $P$ be the (output) index set
  \be
  P := \{-N/2\le k < -(N-p)/2\} \cup \{(N-p)/2\le k< N/2\}~,
  \label{P}
  \ee
  and using the inequality $(a-b)^2 < 2(a^2+b^2)$, we bound
  \bea
  \|A\mbf{v}\|_2^2
  &=&
  \sum_{k\in P} |(F\mbf{g})_k|^2 = \sum_{k\in P} |(F\mbf{f})_k - (F\mbf{h})_k|^2
  \nonumber \\
  &\le& 2 \sum_{k\in P} |(F\mbf{f})_k|^2  + 2 \|F\mbf{h}\|_2^2
  \le 2 \sum_{k\in P} |(F\mbf{f})_k|^2  + 2 \|F\|^2\|\mbf{h}\|_2^2
  ~.
  \label{Aubnd}
  \eea
  To bound the first sum in \eqref{Aubnd}
  we set $K:=(N-\pbar)/2$ as
  a lower bound on the half-width of $\overline{P}$,
  the complement of $P$ in the full set $\{-N/2 \le k < N/2\}$.
  Using this and \eqref{Ffub},
  \bea
  \sum_{k\in P} |(F\mbf{f})_k|^2 &\le&
  \sum_{|k|>K, -N/2\le k< N/2} \!\! |(F\mbf{f})_k|^2
  \;\le\;
   2(\sqrt{8\pi}\sigma + 1)^2
  \sum_{k=K+1}^\infty e^{-(2\pi\sigma k/N)^2}
  \nonumber \\
  &\le\;&
  2(\sqrt{8\pi}\sigma + 1)^2
  \sum_{\ell>0}e^{-\left(\frac{2\pi \sigma}{N}\right)^2(K^2+\ell^2+2K\ell)}
  \nonumber \\
  &\le\;&
  2(\sqrt{8\pi}\sigma + 1)^2
    e^{-\left(\frac{2\pi \sigma K}{N}\right)^2}
    \sum_{\ell>0}e^{-\left(\frac{2\pi \sigma \ell}{N}\right)^2}
  \nonumber \\
  &\le&
  2(\sqrt{8\pi}\sigma \!+\!1)^2
  e^{-\left(\frac{2\pi \sigma K}{N}\right)^2}
  \!
  \int_0^\infty
  \!\!
  e^{-\left(\frac{2\pi \sigma x}{N}\right)^2} dx
  =
  4\sqrt{\pi} \sigma N \biggl(1 \!+\! \frac{1}{\sqrt{8\pi}\sigma}\biggr)^2
  \!\!
  e^{-\left(\frac{2\pi \sigma K}{N}\right)^2}
  \!
  .  \nonumber
  \eea
  To bound the second term in \eqref{Aubnd}, we use \eqref{fjub}
  and apply an identical method to get
  \bea
  \|\mbf{h}\|_2^2 &=& \sum_{j\notin Q} f_j^2
  \le \!\!\!  \sum_{|j|>\qbar/2, -N/2\le j < N/2} \!\!\!  f_j^2
  \;\le\;
  2\left(2+ \sqrt{2\pi}\frac{\sigma}{N}\right)^2
  \sum_{j=\qbar/2+1}^\infty
  e^{-(j/\sigma)^2}
  \nonumber\\
  &\le&
  4\sqrt{\pi}\sigma \left(1+ \sqrt{\frac{\pi}{2}}\frac{\sigma}{N}\right)^2
  e^{-(\qbar/2\sigma)^2}~.
  \nonumber
  \eea
  Substituting the last two results and $\|F\|^2=N$ into \eqref{Aubnd}
  and gathering common terms gives
  \be
  \|A\mbf{v}\|_2^2
  \;\le\;
  8\sqrt{\pi} N \sigma
  \left[
    \biggl(1 + \frac{1}{\sqrt{8\pi}\sigma}\biggr)^2
    e^{-(2\pi \sigma K/N)^2}
    +
    \left(1+ \sqrt{\frac{\pi}{2}}\frac{\sigma}{N}\right)^2
    e^{-\qbar^2/4\sigma^2}
    \right]
  \label{Aubnd2}
  \ee
  Choosing the width $\sigma$ to balance the two exponential rates gives
  \be
  \sigma^2 = \frac{\qbar}{2\pi(1-\pbar/N)}~.
  \label{sigbal}
  \ee
  We can now bound the prefactor $(1+1/\sqrt{8\pi}\sigma)^2 \le
  (1+\sqrt{1-\pbar/N}/2\sqrt{\qbar})^2 \le 3$ using $\qbar\ge1$
  implied by the hypothesis $q>2$.
  Also, by the arithmetic-geometric inequality,
  $(1+ \sqrt{\pi}\sigma/\sqrt{2}N)^2 \le
  2 + \pi\sigma^2/N^2 \le 3$ since $N-\pbar \ge 1$ from the hypothesis
  $p<N-2$.
  Substituting these and \eqref{sigbal} into
  \eqref{Aubnd2} gives
  \be
  \|A\mbf{v}\|_2^2
  \;\le\;
  24\sqrt{2} \sqrt{\frac{\qbar}{1-\pbar/N}} N e^{-\frac{\pi}{2} \qbar(1-\pbar/N)}~.
  \label{Aubnd3}
  \ee
  Finally, the crude lower bound $\|\mbf{v}\|^2_2\ge 1$ results
  by keeping only the term $j=n=0$ in the definition \eqref{fj} of $f_j$.
  Combining this with the square-root of \eqref{Aubnd3},
  and simplifying $\sqrt{24\sqrt{2}}<6$, gives \eqref{sigUB}.
\end{proof}

\begin{rmk}
  A similar proof idea---an explicit trial function
  (in their case a truncated Gaussian)
  that is near a function whose Fourier transform
  nearly has compact support---was used by Landau and Pollak in \cite[Lem.~4]{slepianIII}
  to show that the prolate eigenvalue $\mu_0$ is exponentially close to $1$.
\end{rmk}

With $\sigma_\tbox{min}$ upper bounded by Lemma~\ref{l:m},
and $\sigma_1 \ge \sqrt{p}$ by Proposition~\ref{p:s1},
Theorem~\ref{t:m} follows from inserting these two bounds into
the definition of $\cond(A)$, namely \eqref{conddef}.

\section{Proof of Kaiser--Bessel doubled rate (Theorem~\ref{t:kb})}
\label{s:kb}

Theorem~\ref{t:kb} is an immediate consequence of 
the lower bound on $\sigma_1(A)$ (Proposition~\ref{p:s1}),
and the following upper bound on $\sigma_\tbox{min}(A)$.

\begin{thm}\label{t:kbsig} 
  Let $A$ be a cyclically contiguous $p\times q$ submatrix of the
  $N\times N$ discrete Fourier matrix $F$, with $1\le q\le p < N$ (i.e.,
  either square or ``tall''). Then, expressed in terms of $\al := p/N \in(0,1)$,
  \be
  \sigma_\tbox{min}(A)
  \;\le\; \frac{2\sqrt{N} (1 + 6 \sqrt{\al} q)}{I_0\big(\frac{\pi}{2}(1-\al)q\big) - 1}
  ~,
  \label{kbsig}
  \ee
  where $I_0$ is the modified Bessel function of the first kind of order zero.
\end{thm}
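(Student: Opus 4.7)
The plan is to follow the same framework as the proof of Lemma~\ref{l:m}, but replace the periodized Gaussian trial by a periodized Kaiser--Bessel (KB) function drawn from the pair \eqref{KBpair}. The crucial advantage of the KB pair is that one of its members has \emph{compact} support in position space, so one half of the exponential balance that cost a factor of two in rate for the Gaussian is eliminated entirely; the second, oscillatory member of the pair then delivers the single, doubled exponential rate.

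Concretely, I would take $f$ to be the (deplinthed) compactly supported KB function with $\mathrm{supp}(f)\subset[-q/2,q/2]$ and shape chosen so that the Fourier-space member decays at rate governed by $\tfrac{\pi}{2}(1-\al)q$, consistent with \eqref{kbsig}. Since $q\le N$, the $N$-periodization $f_j = \sum_n f(j+nN)$ is nonzero only for $j\in Q$, so the leakage vector $\mbf{h}$ from Section~\ref{s:m} is identically zero; in particular $\mbf{f}=\mbf{g}$ and, setting $\mbf{v}:=\mbf{f}|_Q$, we have $A\mbf{v}=(F\mbf{f})|_P$ \emph{exactly}, with no additive correction. Already at this point the estimate \eqref{Aubnd} collapses to a single term, which is the source of the rate doubling.

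By Poisson summation \eqref{PSF}, $(F\mbf{f})_k = \sum_m \hat f(k/N+m)$, a sum of translates of the oscillatory side of the KB pair. For $k\in P$ every shift $k/N+m$ lies outside the principal lobe of $\hat f$, placing us in its sinc-like tail. I would bound $|\hat f(k/N+m)|$ termwise using the explicit elementary form in \eqref{KBpair}, then sum first over $m\in\Z$ (separating a few near-resonant terms from an algebraically convergent far-field tail) and then over $k\in P$, to yield the factor $1+6\sqrt{\al}q$ appearing in the numerator of \eqref{kbsig}. For the denominator, a crude lower bound $\|\mbf{v}\|_2 \ge f_0 = I_0(\tfrac{\pi}{2}(1-\al)q)-1$ comes from keeping only the central index in \eqref{vf}, where the ``$-1$'' is precisely the deplinthing subtraction. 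Applying \eqref{mingro} then gives \eqref{kbsig}, and Theorem~\ref{t:kb} follows immediately by combining with Proposition~\ref{p:s1}.

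The hard part will be controlling the sum of oscillatory sinc-type tails. Unlike the Gaussian case, the shifts of $\hat f$ do not admit a monotone envelope, so a naive absolute-value estimate over $m\in\Z$ may diverge, or lose a spurious factor of $q$. I expect to borrow the techniques of Fourmont \cite{fourmontthesis}, who handled an analogous sum in his NFFT error analysis: bound $|\hat f(k/N+m)|$ by its explicit $O(1/(k/N+m)^2)$ envelope in the oscillatory regime, peel off the few nearly-resonant $m$ separately, and swap the order of summation over $k$ and $m$. This step is what fixes the explicit constants $6$ and $\sqrt{\al}$ in \eqref{kbsig}; the deplinthing constant is itself dictated by the requirement that this sum converge absolutely in the first place.
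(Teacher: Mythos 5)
Your proposal matches the paper's Section~\ref{s:kb} proof in essentially every structural detail: the deplinthed Kaiser--Bessel trial with support inside $Q$ (so $\mbf{h}=\mbf{0}$ and $A\mbf{v}=(F\mbf{f})|_P$ exactly, which is indeed the source of the rate doubling), the Poisson-summation formula \eqref{DKBFf} for $(F\mbf{f})_k$, the cutoff $\si=\tfrac{\pi}{2}(1-\al)q$ chosen so every shifted argument lies at or beyond cutoff for $k\in P$, the denominator bound $\|\mbf{v}\|_2\ge f_0 = I_0(\si)-1$, and Fourmont-style control of the oscillatory tail sum. The paper's Lemma~\ref{l:sincwarp} carries out exactly the $\bigO(\omega^{-2})$-envelope estimate you sketch---though the terms peeled off are the first $\lceil\si^2/a\rceil$ indices where the warp deviation $R(x)$ exceeds $1$, not ``nearly-resonant'' ones (all terms are beyond cutoff)---splitting $\sinc\sqrt{x^2-\si^2}-\sinc x$ into a denominator-warp part (yielding the $\sqrt{\al}$ factor) and a numerator-warp part (yielding the $\bigO(1)$ constant), which together give the numerator $1+6\sqrt{\al}q$ you anticipated.
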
 

Before proving this, we introduce the main tool.
The Kaiser--Bessel analytic Fourier transform pair is, given a parameter
$\si>0$,
\be
\phi(t) = \left\{
\begin{array}{ll} I_0(\si\sqrt{1-t^2}), & |t|\le 1\\
  0~,& \mbox{otherwise,}\end{array}\right.
\longleftrightarrow \;\;
\hat\phi(\omega) = 2 \sinc \sqrt{(2\pi\omega)^2 - \si^2}
~,
\label{KBpair}
\ee
with the Fourier convention \eqref{ft}.
Apparently due to B.~F.~Logan \cite{kaiserinterview},
it is stated without proof in \cite[p.~232--233]{kaiser},
and since has become popular for windowing and gridding in signal processing
(see \cite{fourmont,usingnfft,keranal} and references within).
Since it is not listed in standard tables,
and we know of no published proof, we include one in the Appendix.
The parameter $\sigma$ may be interpreted as
a ``cut-off'' frequency:
for $2\pi|\omega|<\sigma$ the sinc (and hence sin) has
imaginary argument so is exponentially large (around $e^{\si}$),
whereas for all $2\pi|\omega|\ge\sigma$ it is bounded by $2$.

Our precise choice of Fourier pair will be informed by the need to
bound an infinite algebraic sum \eqref{PSF} over its frequency argument.
Since $\phi$ in \eqref{KBpair} is discontinuous at $\pm 1$
(see Figure~\ref{f:dkb}(a)),
the tails of $\hat\phi$ have sinc-type oscillations whose amplitude decays
only as $|\omega|^{-1}$ (see Figure~\ref{f:dkb}(d)); thus
such algebraic sums are not absolutely convergent,
making the analysis tricky.

\begin{rmk}\label{r:offset}  
A very similar type of sum over the tails of
the function $\hat\phi$ in \eqref{KBpair}
has already been bounded in a detailed analysis
by Fourmont \cite{fourmontthesis,fourmont}.
However, he relies on the fact that his sum is over the tail
of an algebraic series with {\em zero offset},
allowing him to build on the uniformly bounded Fourier series
$\sum_{k>0} (\sin kx)/k = (\pi-x)/2$ for $0<x<2\pi$, zero for $x=2\pi$.
When series with arbitrary offset
are instead allowed, as we will require, logarithmic
divergence is possible,
as shown by $\sum_{k>0} \sin(2\pi k + \pi/2)/k = \infty$.
\end{rmk}

For this reason, in order to simplify the
analysis we subtract a top-hat function
of height 1 and width 2 from $\phi$ in \eqref{KBpair} to give the
``deplinthed'' Kaiser--Bessel
function, which, in contrast to $\phi$, is continuous on $\R$.
Since this subtraction causes a change in $\hat\phi$ of at most 2,
it preserves its excellent Fourier localization.
Now rescaling $t$ so that the support falls
within a $q$-sized interval, the deplinthed KB pair becomes
\be
f(t) =
\left\{\!
\begin{array}{ll} I_0\biggl(\si\sqrt{1 - \big(\frac{2t}{q}\big)^2}\biggr) - 1, & |t|\le q/2\\
  0~,& \mbox{otherwise,}\end{array}\right.
   \hat f(\omega) = q \!\left[
     \sinc \!\sqrt{(\pi q \omega)^2 \!-\! \si^2}
     -
     \sinc \pi q \omega
     \right]
\label{DKBpair}
\ee
which will play the role of \eqref{pair}
in the following proof of Theorem~\ref{t:kbsig}.
This pair is shown by Figure~\ref{f:dkb}(a,b,e).

\begin{proof}
  We apply Poisson summation \eqref{PSF} to the pair
  \eqref{DKBpair}, and set $\omega=k/N$,
to give an explicit formula for the action of the DFT
on $\mbf{f}$, the vector with entries $f_j = f(j)$,
$-N/2\le j < N/2$, being the discrete samples of \eqref{DKBpair}.
This action is
\be
(F\mbf{f})_k =
q\! \sum_{m\in\Z} \! \left[ \sinc \!\sqrt{(\pi q)^2(k/N + m)^2 - \si^2}
  -\sinc \pi q(k/N + m)
  \right], \;\; -N/2\le k < N/2.
\label{DKBFf}
\ee
As in the proof of Lemma~\ref{t:m}, the submatrix $A$ is now
arranged to sit within $F$ so that its input (column) index set is
$-q/2\le j < q/2$, and the output (row) index set $P$ is as in \eqref{P}.

However, in contrast to the Gaussian case, $\si$ may now be chosen up front,
as follows.
We need to ensure that for each index $k\in P$,
all of the arguments in the terms in \eqref{DKBFf} fall at or beyond cutoff
(see Figure~\ref{f:dkb}(b)),
so that their contribution is exponentially small relative to $\|\mbf{f}\|$.
It is sufficient to check this for the term $m=0$, because $|k|\le N/2$.
This gives the criterion $\pi q |k|/N \ge \si$ for all $k\in P$.
For all $k\in P$ we have $|k|\ge(N-p)/2 = (1-\al)N/2$, so the choice
\be
\si = \frac{\pi}{2} (1-\al) q
\label{sig}
\ee
optimally satisfies the criterion, and we fix this from now on.

In order to bound \eqref{DKBFf} we will break up the sum into two
one-sided sums, each of which has the form
\be
S_\si(a,b) \;:=\; \sum_{m\ge 0} \sinc \sqrt{(a m+b)^2-\si^2} - \sinc (a m+b)
~,
\label{S}
\ee
where $a=\pi q$ is the spacing and $b$ the offset
for the arithmetic progression of frequencies $am+b$.
Using the even symmetry of the sinc function,
we now split the sum \eqref{DKBFf} into three parts,
$m<0$, $m>0$, and $m=0$, to get, in terms of \eqref{S}, the three terms
\bea
(F\mbf{f})_k &\;= \;&q S_\si\left(\pi q,\pi q \bigl(1- \frac{k}{N}\bigr)\right)
\;+\; q S_\si\left(\pi q,\pi q \bigl(1+ \frac{k}{N}\bigr) \right)
\nonumber\\
&& + \; q\left[\sinc \sqrt{(\pi q k/N)^2 - \si^2}
- \sinc (\pi q k/N)\right]~. \nonumber
\eea
Here in both instances of the one-sided sum $S_\si(\pi q,b)$,
the offset $b$ obeys $b>\si$,
because $|k|\le N/2$ and $\si < \pi q/2$.
Thus each sum satisfies the conditions of
Lemma~\ref{l:sincwarp}, stated and proved in the next subsection,
which bounds it by
\be
|S_\si(\pi q ,b)| \;\le\; \frac{5}{2\pi q\sqrt{\al}} + 5
\;<\;
\frac{1}{q\sqrt{\al}} + 5~,
\label{Sq}
\ee
uniformly over $k\in P$.
From the above formula for $(F\mbf{f})_k$,
applying the triangle inequality,
the fact that $|\sinc y|\le 1$ for $y\in\R$,
then the bound \eqref{Sq}, gives, for all $k\in P$,
$$
|(F\mbf{f})_k| \;\le\; 2q|S_\si(a ,b)| + 2q \;\le\; \frac{2}{\sqrt{\al}} + 12 q ~,
$$
so that
$$
\sum_{k\in P} (F\mbf{f})_k^2 \;\le\; \left(\frac{2}{\sqrt{\al}} + 12 q\right)^2 p
~.
$$
Defining $\mbf{v}\in\C^q$ as the central $q$ entries of $\mbf{f}$
just as in \eqref{vf},
and noting that the compact support of $f(t)$ makes
all other entries $f_j=0$,
we have $A\mbf{v} = (F\mbf{f})_{k\in P}$ and
$$
\|A\mbf{v}\| \;=\; \left(\sum_{k\in P}(F\mbf{f})_k^2 \right)^{1/2}
\;\le\;
\biggl(\frac{2}{\sqrt{\al}} + 12 q\biggr)\sqrt{p}
\;=\;
2\sqrt{N} (1 + 6 \sqrt{\al} q)~.
$$
We combine this with the simple bound $\|\mbf{v}\| \ge f_0 = f(0) = I_0(\si)-1$,
and recall $\si_\tbox{min}(A) \le \|A \mbf{v}\|/\|\mbf{v}\|$
for any $\mbf{v}\neq\mbf{0}$, to complete the proof.
\end{proof}

\bfi[t] 
\ig{width=2.4in}{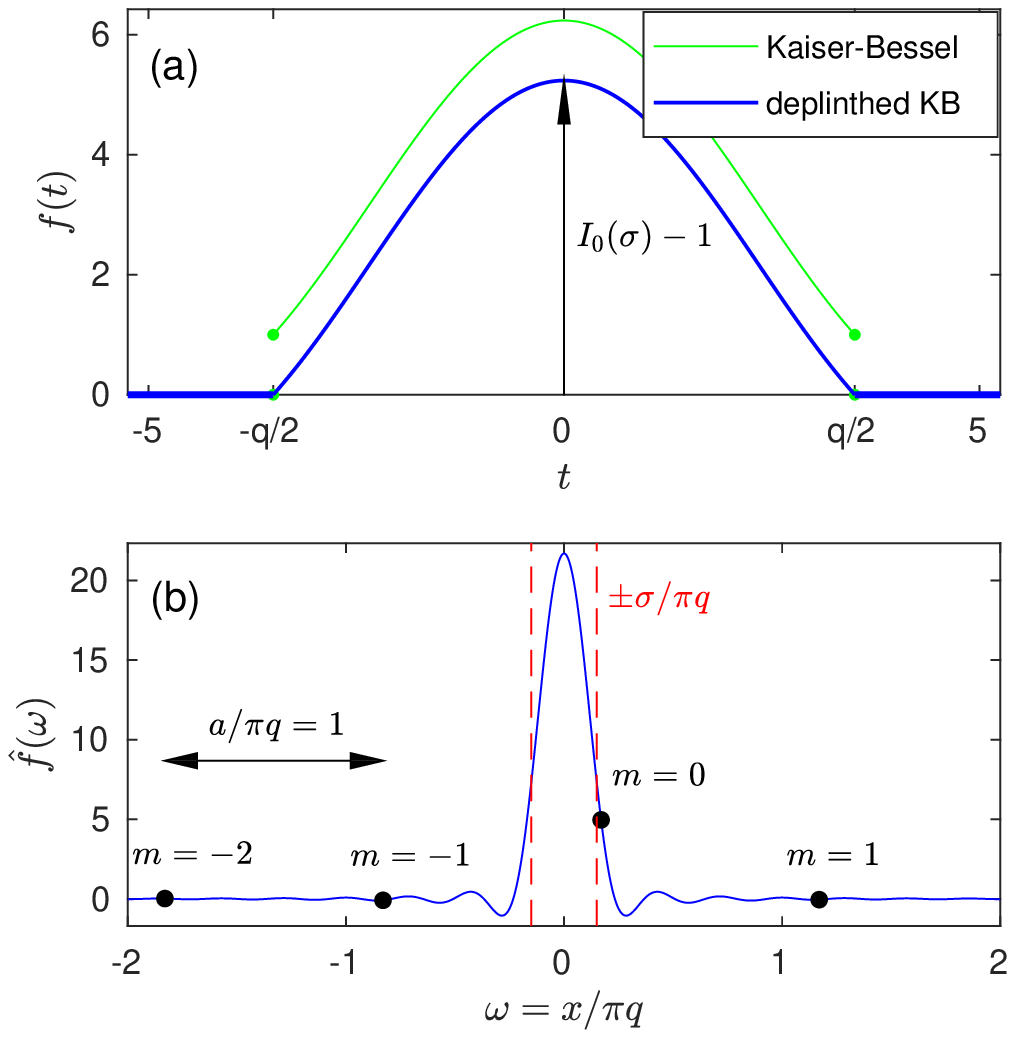}
\hfill
\ig{width=2.5in}{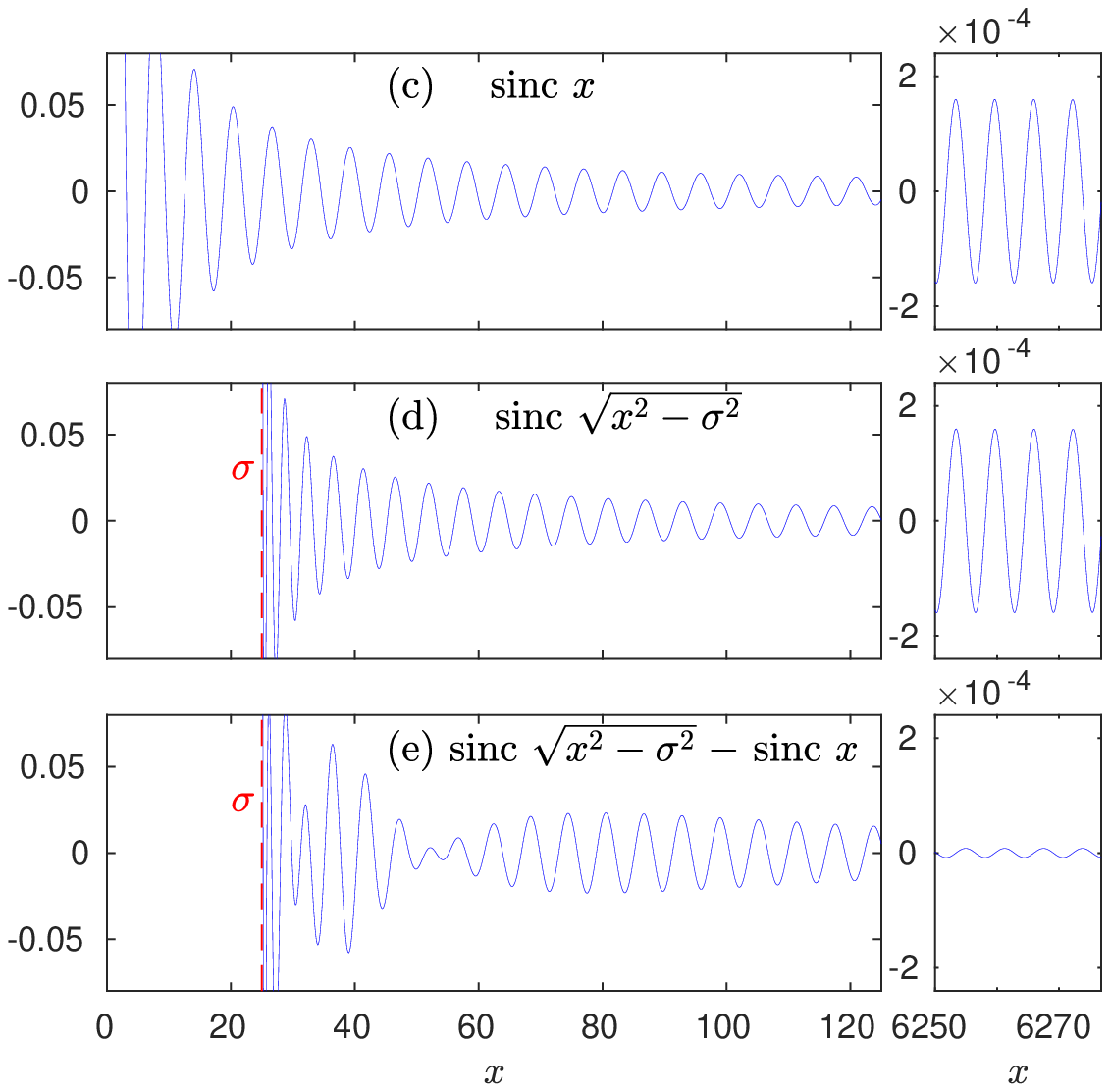}
\ca{
  Functions key to proving Theorem~\ref{t:kb}:
  deplinthed Kaiser--Bessel Fourier transform pair (left panels),
  and various sinc functions (right and far-right panels).
  (a) shows $f(t)$ from \eqref{DKBpair} (blue), and the KB function
  ($\phi(t)$ in \eqref{KBpair} except scaled to have support $[-q/2,q/2]$,
  green).
  Here $q=7$ (chosen small for easy visualization),
  and $\al=0.7$.
  (b) shows $\hat f(\omega)$ from \eqref{DKBpair} (blue),
  the cutoff (dotted red),
  and the arithmetic progression $\omega = k/N + m$ (black dots)
  as in \eqref{DKBFf}, for $k$ just above $(1-\al)N/2$.
  (c,d,e) compares the sinc, ``warped'' sinc, and their difference.
  ($x$ will take the values $x=am+b$). Here $\sigma=25$.
  The narrow plots on the far right are for higher $x$,
  and a zoomed vertical scale, showing that the difference (e) has the fastest
  asymptotic decay ($x^{-2}$ as opposed to $x^{-1}$).
}{f:dkb}
\efi

\subsection{Technical lemmas on one-sided warped sinc sums}

The one-sided sum $S_\si(a,b)$ defined by \eqref{S}
involves the sinc of each frequency in an algebraic progression
$\{am+b\}_{m\ge 0}$, but also the sinc of each frequency in a ``warped''
sequence $\{\sqrt{(am+b)^2-\si^2}\}_{m\ge 0}$.
The proof of Theorem~\ref{t:kbsig} relied on our ability to bound algebraic
sums over the difference between warped and unwarped sinc functions,
expressed by the main Lemma~\ref{l:sincwarp} below.
This lemma is nontrivial, because the sum of
a single sinc over a general algebraic progression is at best conditionally
convergent (because its tail decays with power $-1$),
and may not even be finite (see Remark~\ref{r:offset}).
One might hope that the {\em difference} between
the sinc functions in \eqref{S} 
eventually has more rapid decay, because the difference in
their two arguments tends to zero as $m\to\infty$.
This is true---illustrated by Figure~\ref{f:dkb}(c--e)---and
is the idea behind its proof,
which occupies the rest of this section.

\begin{rmk} \label{r:fourmont}  
  Our technique adapts several steps from the thesis of Fourmont
  \cite[Sec.~2.5]{fourmontthesis},
  who showed how phased sums over a sinc whose argument is a warped
  algebraic progression (with offset $b=na$, $n\in\N$)
  can be bounded by the phased sum over the unwarped sinc.
  This requires decomposing the sinc function into numerator
  and denominator factors, and showing how the effect of warping each factor
  becomes small for large enough $m$.
  His task was to bound the aliasing error in the
  non-uniform FFT algorithm when using \eqref{KBpair} as a spreading
  kernel \cite{fourmont}.
  Our task is in one way simpler because there is no phase,
  but we also need to handle general $b$.
\end{rmk}

\begin{lem} \label{l:sincwarp}  
  Let $a\ge \pi$, $\si\in(0,a/2)$, and $b>\si$.
  Then
  \be
  \big| S_\si(a,b) \big | =
  \biggl|
  \sum_{m\ge 0} \sinc \sqrt{(am+b)^2-\si^2} - \sinc (am+b)
  \biggr|
  \; \le \;
  \frac{5}{2 a\sqrt{\al}} + 5 ~,
  \label{sincwarp}
  \ee
  where $\al = 1 - 2\si/a \in (0,1)$.
\end{lem}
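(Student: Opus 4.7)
The plan is to exploit the near-cancellation between $\sinc y_m$ and $\sinc x_m$, where $x_m := am+b$ and $y_m := \sqrt{x_m^2 - \sigma^2}$. Neither of the tails $\sum_m \sinc x_m$ nor $\sum_m \sinc y_m$ is absolutely convergent, and for general offset $b$ they need not even converge (cf.\ Remark~\ref{r:offset}). However, the identity $x_m^2 - y_m^2 = \sigma^2$ gives $x_m - y_m = \sigma^2/(x_m + y_m) = O(\sigma^2/(am))$, so the pointwise difference of the two sincs should decay like $1/m^2$, which is absolutely summable. The core of the argument is therefore to manipulate each summand algebraically so that the cancellation is visible \emph{before} taking absolute values.

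First I would use the decomposition
$$
\sinc y - \sinc x \;=\; \frac{\sin y - \sin x}{y} \;+\; \sin x \cdot \frac{x - y}{xy}
$$
and apply the product-to-sum identity $\sin y - \sin x = 2\cos((x+y)/2)\sin((y-x)/2)$ together with $|\sin u| \le |u|$, so that both terms carry the small factor $|x_m - y_m|$ explicitly. Substituting $|x_m - y_m| = \sigma^2/(x_m + y_m)$ and the elementary bounds $x_m \ge a$ (from $b>0$) and $y_m \ge am$ for $m \ge 1$ (which follows from $b^2 > \sigma^2$, since $y_m^2 \ge (am)^2 + 2amb + b^2 - \sigma^2$) yields a pointwise estimate of the form $|\sinc y_m - \sinc x_m| \le C\sigma^2/(a^2 m^2)$ with some explicit $C$. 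Summing $m \ge 1$ using $\sum_{m\ge1} m^{-2} = \pi^2/6$, together with the hypothesis $\sigma^2/a^2 < 1/4$, produces a bounded, universal contribution that can be absorbed into the additive constant $5$ of the stated bound.

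The genuinely delicate case is $m=0$, because $y_0 = \sqrt{b^2 - \sigma^2}$ can be arbitrarily small as $b \to \sigma^+$, and the $m \ge 1$ manipulations lose their grip. Crudely, $|\sinc y_0 - \sinc x_0| \le 2$, but to obtain the stated prefactor $5/(2a\sqrt{\alpha})$ one needs a finer estimate that tracks how $y_0$ relates to the scale $a\sqrt{1 - 2\sigma/a} = a\sqrt{\alpha}$; specifically, bounds like $|\sinc y_0| \le \min(1, 1/|y_0|)$ applied with care, and a parallel bound on $|\sinc x_0|$. I would adapt the Fourmont-style decomposition (cf.\ Remark~\ref{r:fourmont}) of the sinc into its sine numerator and linear denominator, tracking how the warped variants of each factor deviate from their unwarped counterparts, but extending his argument---which crucially exploited a zero offset and phased Fourier series---to the present unphased, general-$b$ setting. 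The main obstacle I anticipate is precisely this bookkeeping: apportioning the small-$m$ contribution between the $1/\sqrt{\alpha}$ and $O(1)$ parts of the bound, uniformly over all $b > \sigma$, without either double counting or sacrificing the cancellation that makes the full sum finite.
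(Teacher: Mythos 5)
Your proposal takes a genuinely different route from the paper and, with one small correction, it actually works and gives a cleaner proof. The paper uses Fourmont's decomposition $\sinc y - \sinc x = \sin y\,(1/y - 1/x) + (\sin y - \sin x)/x$, bounds $|\sin y|\le 1$ in the first term (this is the source of the $1/\sqrt{\al}$ factor, via Proposition~\ref{p:denomwarp} at $m=0$ with $y_0\ge \tfrac{a}{2}\sqrt{\al}$), and attacks the second term via the addition formula and Taylor bounds on $R(x)$ (Lemmas~\ref{l:sinwarp}--\ref{l:cosR1}). You instead use the decomposition $\sinc y - \sinc x = (\sin y - \sin x)/y + \sin x\,(1/y - 1/x)$ and the mean-value bound $|\sin y - \sin x|\le |y-x| = \si^2/(x+y)$, which immediately gives absolute summability of the tail: for $m\ge 1$ you have $x_m > am$ and $y_m > am$ (as you correctly establish), so the two pieces are respectively $\le \si^2/(2a^2m^2)$ and $\le \si^2/(2a^3m^3)$, and with $\si^2 < a^2/4$ and $a\ge\pi$ the sums over $m\ge 1$ total at most $\pi^2/48 + \zeta(3)/8\pi < 0.3$.

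The one place you go astray is the $m=0$ term, which you flag as the delicate part requiring a finer, $\sqrt{\al}$-aware estimate. It is not: do not decompose the $m=0$ term at all. Just bound $|\sinc y_0 - \sinc x_0| \le 2$ directly (your two individual pieces at $m=0$ can each blow up as $b\to\si^+$, because $y_0$ appears in both denominators, but their sum is the manifestly bounded $\sinc y_0 - \sinc x_0$). With that, the full sum is $< 2 + 0.3 = 2.3$, which is comfortably below the paper's $5/(2a\sqrt{\al}) + 5$. In other words, your approach never needs the $1/\sqrt{\al}$ part of the stated bound at all, and in fact proves a strictly stronger constant bound under only the stated hypothesis $b>\si$ (whereas the paper's chain tacitly relies on $b\ge a/2$, which does hold in the application). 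Your worry about ``apportioning between the $1/\sqrt{\al}$ and $O(1)$ parts'' is a phantom obstacle created by assuming the paper's bound is tight. So: the strategy is sound and genuinely simpler; replace the anticipated finer $m=0$ estimate with the trivial one, and you're done.
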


Note that we use the symbol $\al$ here 
since it is consistent with \eqref{sig}
when $a=\pi q$, as occurs when this lemma is applied above in \eqref{Sq}.

\begin{proof}
  From now on we use $x= a m +b$ to denote a frequency in
  the progression.
  Then, since $x>0$,
  \bea
  S_\si(a,b)
  &=& \sum_{m\ge0} \frac{\sin \sqrt{x^2-\si^2}}{\sqrt{x^2-\si^2}}
  - \frac{\sin x}{x}
  \nonumber \\
  &=& \sum_{m\ge0} \sin \sqrt{x^2-\si^2}
    \biggl(\frac{1}{\sqrt{x^2-\si^2}}-\frac{1}{x}\biggr)
    + \frac{\sin \sqrt{x^2-\si^2} - \sin x}{x}
    ~,
    \eea
    which is Fourmont's decomposition.
  By the triangle inequality, and using that $|\sin y|\le 1$,
  $$
  \big|S_\si(a,b)\big|
  \;\le \; \sum_{m\ge0}
  \biggl| \frac{1}{\sqrt{x^2-\si^2}}-\frac{1}{x} \biggr|
  + \sum_{m\ge0} \biggl| \frac{\sin \sqrt{x^2-\si^2} - \sin x}{x} \biggr|
  ~.
  $$
  We apply the ``denominator warp'' Proposition~\ref{p:denomwarp} stated below (using $y = a/2$ and the
  definition of $\al$)
  to the first summand.
  We also apply the ``numerator warp'' Lemma~\ref{l:sinwarp} stated below,
  which applies since $a\ge \pi$,
  to bound the second sum by $5$.
  We then split off the first term of the remaining sum and bound the rest by
  an integral:
  \bea
  \big|S_\si(a,b)\big|
  &\le&
  \frac{\si^2}{\sqrt{\al}} \sum_{m\ge0} \frac{1}{x^3} + 5
  \; \le \;
  \frac{\si^2}{\sqrt{\al}b^3}
  +
  \frac{\si^2}{\sqrt{\al}}
  \sum_{m>0} \frac{1}{x^3} + 5
  \nonumber \\
  &\le & 
  \frac{2}{a\sqrt{\al}} + \frac{\si^2}{\sqrt{\al}}\cdot\frac{1}{a} \int_b^\infty \frac{dx}{x^3} + 5
  \; \le \;
   \frac{2}{a\sqrt{\al}} +  \frac{1}{2a\sqrt{\al}} + 5~,
  \eea
  where in the final two steps we used $1/b < 2/a$ and $\si/b < 1$.
\end{proof}

The rest of this subsection is devoted to the required bounds on the
effect of warping on the numerator and denominator of the sinc function.
The first estimate shows that the effect of warping the denominator
is $\bigO(x^{-3})$ as frequency $x\to\infty$, with an explicit constant.

\begin{pro}[Denominator warp]
  \label{p:denomwarp}  
  Fix $\si>0$ and $y>\si$, then
  \be
  \frac{1}{\sqrt{x^2-\si^2}} - \frac{1}{x} \;\le\;
  \frac{\si^2}{\sqrt{1-\si/y}} \cdot
  \frac{1}{x^3}
  ~,\qquad\mbox{ for all } x\ge y~.
  \ee
\end{pro}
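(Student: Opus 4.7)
The plan is to first rewrite the left-hand side in a form that makes the $x^{-3}$ decay explicit, and then bound the remaining factors from below uniformly in $x\ge y$.

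First I would rationalize the difference by writing
$$
\frac{1}{\sqrt{x^2-\sigma^2}} - \frac{1}{x} = \frac{x - \sqrt{x^2-\sigma^2}}{x\sqrt{x^2-\sigma^2}}
= \frac{\sigma^2}{x\sqrt{x^2-\sigma^2}\bigl(x + \sqrt{x^2-\sigma^2}\bigr)},
$$
using the conjugate identity $x - \sqrt{x^2-\sigma^2} = \sigma^2/(x+\sqrt{x^2-\sigma^2})$. This immediately exposes the $\sigma^2$ numerator that matches the claimed bound; the task reduces to showing that the denominator is at least $x^3\sqrt{1-\sigma/y}$.

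Next I would lower-bound each of the two nontrivial factors in the denominator. The factor $x+\sqrt{x^2-\sigma^2}$ is at least $x$, trivially, since $x\ge\sigma$. The key remaining step is to show that $\sqrt{x^2-\sigma^2}\ge x\sqrt{1-\sigma/y}$ for all $x\ge y$. Writing $\sqrt{x^2-\sigma^2} = x\sqrt{1-\sigma^2/x^2}$, it suffices to check that $\sigma^2/x^2 \le \sigma/y$, which follows from the chain
$$
\frac{\sigma^2}{x^2}\;\le\;\frac{\sigma^2}{y^2}\;\le\;\frac{\sigma}{y},
$$
where the first inequality uses $x\ge y$ and the second uses the hypothesis $\sigma \le y$ (equivalently, $\sigma/y\le 1$). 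Multiplying the two lower bounds together with the factor $x$ yields
$$
x\sqrt{x^2-\sigma^2}\bigl(x+\sqrt{x^2-\sigma^2}\bigr)\;\ge\; x\cdot x\sqrt{1-\sigma/y}\cdot x \;=\; x^3\sqrt{1-\sigma/y},
$$
which inserted into the rationalized form gives the claim. There is no real obstacle here: the only place any care is needed is observing that $\sigma/y\le 1$ lets one pass from $\sigma^2/y^2$ to $\sigma/y$, producing the slightly weaker but sufficient factor $\sqrt{1-\sigma/y}$ in the final bound (rather than the sharper $\sqrt{1-\sigma^2/y^2}$, which would suffice but is less convenient downstream in Lemma~\ref{l:sincwarp}).
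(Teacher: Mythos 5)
Your proof is correct and follows essentially the same route as the paper: rationalize the difference via the conjugate to expose the $\sigma^2$ numerator and the three denominator factors $x$, $\sqrt{x^2-\sigma^2}$, and $x+\sqrt{x^2-\sigma^2}$, bound the last by $x$, and bound the middle factor below by $x\sqrt{1-\sigma/y}$. The only cosmetic difference is that you pass from $\sqrt{1-\sigma^2/x^2}$ to $\sqrt{1-\sigma/y}$ via $\sigma^2/x^2\le\sigma^2/y^2\le\sigma/y$, whereas the paper uses $\sqrt{1-(\sigma/x)^2}\ge\sqrt{1-\sigma/x}$ and applies $x\ge y$ at the end; both give the same result.
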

\begin{proof}
  Since $x>\si$, expanding the left side gives
  $$
  \frac{1 - \sqrt{1 - (\si/x)^2}}{x\sqrt{1 - (\si/x)^2}}
  = \frac{(\si/x)^2}{x\sqrt{1 - (\si/x)^2}(1 + \sqrt{1 - (\si/x)^2})}
  \le \frac{\si^2}{\sqrt{1-\si/x}}
    \cdot
  \frac{1}{x^3}~,
  $$
  and applying $x\ge y$ to the first factor gives the result.
\end{proof}

The following shows that the effect of warping the numerator on the sum is
uniformly bounded over the allowed set of parameters $\si, a, b$.
Its proof needs several results which complete the subsection.

\begin{lem}[Numerator warp]
  \label{l:sinwarp}   
  Let $a\ge \pi$, $\si \in (0,a/2)$, and $b\ge a/2$.
  Then, using the abbreviation $x=am+b$,
  \be
  \sum_{m\ge0} \biggl| \frac{\sin \sqrt{x^2-\si^2} - \sin x}{x} \biggr|
  \;\le\;
  5~.
  \label{sinwarp}
  \ee
\end{lem}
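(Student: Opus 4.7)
The plan is straightforward: bound each summand using the 1-Lipschitz property of $\sin$, then observe that the resulting series converges rapidly.

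First I would rationalize the difference of arguments. Writing $A := \sqrt{x^2-\si^2}$ and $B := x$, the standard conjugate trick yields
\be
B - A \;=\; x - \sqrt{x^2-\si^2} \;=\; \frac{\si^2}{x + \sqrt{x^2-\si^2}} \;\le\; \frac{\si^2}{x}~.
\ee
Since $\sin$ is 1-Lipschitz on $\R$, this immediately gives, termwise,
\be
\biggl|\frac{\sin\sqrt{x^2-\si^2} - \sin x}{x}\biggr|
\;\le\; \frac{|A-B|}{x} \;\le\; \frac{\si^2}{x^2}~.
\ee

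Next, I would use the hypotheses $b \ge a/2$ and $\si < a/2$ to bound each term by a decaying sequence. Since $x = am+b \ge a(m+1/2)$ and $\si^2 \le (a/2)^2$,
\be
\frac{\si^2}{x^2} \;\le\; \frac{(a/2)^2}{a^2(m+1/2)^2} \;=\; \frac{1}{(2m+1)^2}~.
\ee

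Finally I would sum the resulting series:
\be
\sum_{m\ge 0} \frac{1}{(2m+1)^2} \;=\; \frac{\pi^2}{8} \;<\; 5~,
\ee
which would establish \eqref{sinwarp}. There is no substantive obstacle in this approach; the only real insight is that rationalization turns $B - A$ into an expression of size $\si^2/x$ rather than $O(1)$, which is precisely what makes the 1-Lipschitz bound summable over $m$. The hypothesis $a \ge \pi$ appears not to be needed by this route; presumably it is stated for consistency with the applied setting $a = \pi q$ with $q \ge 1$. The target constant $5$ is extremely loose here (the actual sum is $\pi^2/8 \approx 1.23$), so I would not try to sharpen it, since only a uniform upper bound is required for the downstream use in Lemma~\ref{l:sincwarp}.
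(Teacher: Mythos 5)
Your proof is correct, and it takes a genuinely different and simpler route than the paper's. The paper proves this lemma via the sine addition formula, writing $\sin\sqrt{x^2-\si^2}-\sin x = \sin x\,(\cos R(x)-1) - \cos x\,\sin R(x)$ with $R(x)=x-\sqrt{x^2-\si^2}$, and then bounds $\sum|\cos R(x)-1|/x$ and $\sum|\sin R(x)|/x$ separately in two sublemmas, each requiring a split at a threshold index $m_0=\lceil\si^2/a\rceil$ (crude bound for $m<m_0$, Taylor-type bound for $m\ge m_0$). Your approach bypasses all of that: the $1$-Lipschitz property of $\sin$ converts the numerator difference directly into $R(x)\le\si^2/x$ (the same elementary estimate the paper records as Proposition~\ref{p:Rbnd}), yielding a termwise bound of $\si^2/x^2$ and hence an absolutely convergent $p$-series. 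This avoids the addition-formula bookkeeping and the threshold splits entirely, does not use the hypothesis $a\ge\pi$ (which the paper's route does need), and gives the sharper constant $\pi^2/8\approx 1.23$ rather than $5$. Since downstream only an $\bigO(1)$ bound is needed, nothing else changes, but your argument is the more efficient one.
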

\begin{proof}
  Fixing $\si$, will make frequent use of the frequency warping deviation function,
  \be
  R(x):=x -\sqrt{x^2-\si^2}~,\qquad x\ge\si~.
  \label{R}
  \ee
  Now, noting $\sin \sqrt{x^2-\si^2} =
  \sin (x - R(x))$, applying the addition formula,
  subtracting $\sin x$ from both sides
  and dividing by $x$ gives
  $$
  \frac{\sin \sqrt{x^2-\si^2} - \sin x}{x} =
  \sin x\frac{\cos R(x) - 1}{x} - \cos x \frac{\sin R(x)}{x}
  ~.
  $$
  Applying the triangle inequality termwise, and bounds on sin and cos, gives
  $$
  \sum_{m\ge 0}
  \biggl| \frac{\sin \sqrt{x^2-\si^2} - \sin x}{x} \biggr|
  \; \le \;
  \sum_{m\ge 0}
  \biggl|\frac{\cos R(x) - 1}{x}\biggr| +
  \sum_{m\ge 0}
  \biggl|\frac{\sin R(x)}{x}\biggr|
  \;\le\; 3 + 2
  ~,
  $$
  by Lemmas~\ref{l:cosR1} and \ref{l:sinR} respectively.
\end{proof}

\begin{lem}
  \label{l:sinR}   
  Let $a\ge \pi$, $\si \in (0,a/2)$, and $b\ge a/2$.
  Then, with $R(x)$ as in \eqref{R}, and using the abbreviation $x=am+b$,
  \be
  \sum_{m\ge0} \biggl| \frac{\sin R(x)}{x} \biggr|
  \;\le\;
  2~.
  \label{sinR}
  \ee
\end{lem}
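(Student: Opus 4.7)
The plan is to exploit the fact that $R(x)=x-\sqrt{x^2-\sigma^2}$ decays like $\sigma^2/(2x)$ for large $x$, so the summand is $\bigO(x^{-2})$ and the series is trivially convergent with an explicit small constant. Unlike the cosine companion Lemma~\ref{l:cosR1}, no delicate cancellation between terms is needed: the pointwise bound $|\sin R(x)|\le R(x)$ is already enough.

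First I would rationalize the surd:
$$
R(x) \;=\; x - \sqrt{x^2-\sigma^2} \;=\; \frac{\sigma^2}{x + \sqrt{x^2-\sigma^2}} \;\le\; \frac{\sigma^2}{x}, \qquad x\ge\sigma.
$$
Combining this with the elementary inequality $|\sin z|\le|z|$ yields the uniform bound $|\sin R(x)/x|\le \sigma^2/x^2$ valid for every $x=am+b>\sigma$ (which holds under the hypotheses, since $b\ge a/2>\sigma$).

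Next I would split the sum into the $m=0$ term and the tail $m\ge 1$. For $m=0$ one has $x=b$, and the hypotheses $\sigma<a/2$ and $b\ge a/2$ give $\sigma^2/b^2 < 1$. For $m\ge 1$ the summand is monotonically decreasing in $m$, so it is bounded by the corresponding integral:
$$
\sum_{m\ge 1}\frac{\sigma^2}{(am+b)^2} \;\le\; \int_0^\infty \frac{\sigma^2}{(at+b)^2}\,dt \;=\; \frac{\sigma^2}{a\,b} \;<\; \frac{(a/2)^2}{a\cdot(a/2)} \;=\; \frac{1}{2}.
$$
Adding the two contributions gives a total at most $3/2<2$, which is the required estimate.

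There is essentially no obstacle in this lemma; the slack in the final constant ($3/2$ vs.\ $2$) is comfortable and leaves room to absorb any minor looseness in the individual bounds. Notice that the hypothesis $a\ge\pi$ is \emph{not} invoked here: it is needed only for the harder Lemma~\ref{l:cosR1}, where $(\cos R(x)-1)/x$ is $\bigO(x^{-3})$ but with a less favourable constant, requiring a more careful argument of the Fourmont type (cf.\ Remark~\ref{r:fourmont}).
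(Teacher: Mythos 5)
Your proof is correct and takes a genuinely simpler route than the paper. The paper splits the sum at an index $m_0:=\lceil\sigma^2/a\rceil$, uses the crude bound $|\sin R(x)|\le 1$ on the head of the sum (paying for that with a harmonic-type estimate, \eqref{harm0}, which is what forces the hypothesis $a\ge\pi$), and reserves the refined bound $R(x)\le\sigma^2/x\le 1$ for the tail $m\ge m_0$. You instead invoke the global inequality $|\sin z|\le|z|$ valid for \emph{all} real $z$ (not just $|z|\le 1$), which makes the split unnecessary: the summand is bounded by $\sigma^2/x^2$ uniformly, the $m=0$ term is at most $\sigma^2/b^2<1$, and the tail integrates to $\sigma^2/(ab)<1/2$, giving a total $<3/2$. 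This is shorter, yields a slightly tighter constant, and drops the hypothesis $a\ge\pi$ from this particular lemma. Your closing remark is also on target: the same global trick does not rescue the companion Lemma~\ref{l:cosR1}, because there the $m=0$ term of the pointwise bound $\sigma^4/(2x^3)$ scales like $a/4$ and is unbounded in $a$, so the split-and-crude-bound strategy (and hence $a\ge\pi$) really is needed there. The only thing worth flagging is stylistic: the paper's split at $m_0$ is shared machinery with the cosine lemma, so the author likely preferred uniformity over minimality; but as a standalone proof of \eqref{sinR}, yours is cleaner.
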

\begin{proof}
  By Proposition~\ref{p:Rbnd} below, $R(x) = \bigO(x^{-1})$
  so that the tail of the sum is $\bigO(x^{-2})$ and hence summable.
  However, this only becomes useful for sufficiently large $m$.
  Thus the idea will be to split the sum at index
  $m_0 := \lceil \si^2/a\rceil \ge 1$,
  chosen so that, given the hypotheses on $\si$, $a$, and $b$,
  \be
  R(x) \le \frac{\si^2}{x} \le 1
  ~, \qquad \mbox{ for all } m\ge m_0~.
  \label{Rintail}
  \ee
  This follows easily from the fact that $x = am+b \ge am_0+b \ge \si^2+b$,
  that $b>\si$, and from Proposition~\ref{p:Rbnd}.
  For the first $m_0$ terms we
  use the crude bound $|\sin R(x)|\le 1$,
  but use $\sin y \le y$ for $0<y\le 1$ and \eqref{Rintail}
  in the tail sum, and get
  \be
  \sum_{m\ge0} \biggl| \frac{\sin R(x)}{x} \biggr|
  \;\le\;
  \sum_{m=0}^{m_0-1} \biggl| \frac{\sin R(x)}{x} \biggr| +
  \sum_{m\ge m_0} \biggl| \frac{\sin R(x)}{x} \biggr|
  \;\le\;
  \sum_{m=0}^{m_0-1} \frac{1}{x} + 
  \si^2 \sum_{m\ge m_0} \frac{1}{x^2}~.
  \label{sinRsplit}
  \ee
  After splitting off its first term, we
  bound the rest of the finite sum by an integral with upper limit
  $a(m_0-1)+b<\si^2+b$ 
  \bea
  \sum_{m=0}^{m_0-1} \frac{1}{am+b}
  &\le&
  \frac{1}{b} + \frac{1}{a}\int_b^{b+\si^2} \frac{dx}{x}
  \;\le\;
  \frac{2}{a} + \frac{1}{a}\log(1+\si^2/b)
  \;\le\;
  \frac{2}{a} + \frac{1}{a}\log(1+a/2)
  \nonumber \\
  &\le &  \frac{2}{\pi} + \frac{1}{\pi}\log(1+\pi/2)
  \;\le\; 1 ~,               
  \label{harm0}
  \eea
  where the replacement of $a$ by its lower limit $\pi$
  is justified by checking that the function of $a$ has negative
  derivative for all $a>0$.
  The infinite sum in \eqref{sinRsplit} is similarly bounded
  by an integral
  $$
  \si^2\sum_{m\ge m_0} \frac{1}{(am+b)^2}
  \;\le\;
  \frac{\si^2}{(am_0+b)^2} + \frac{\si^2}{a}\int_{b+\si^2}^\infty \frac{dx}{x^2}
  \;\le\;
  \frac{\si^2}{a^2} + \frac{1}{a}     
  \;\le\; 1~,
  $$
  using $\si \le a/2$ and $a\ge \pi$.
  Adding the two above bounds completes the proof.
\end{proof}

\begin{lem}
  \label{l:cosR1}   
  Let $a\ge \pi$, $\si \in (0,a/2)$, and $b\ge a/2$.
  Then, with $R(x)$ as in \eqref{R}, and using the abbreviation $x=am+b$,
  \be
  \sum_{m\ge0} \biggl| \frac{\cos R(x) - 1}{x} \biggr|
  \;\le\;
  3~.
  \label{cosR1}
  \ee
\end{lem}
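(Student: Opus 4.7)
The plan is to mimic closely the proof of Lemma~\ref{l:sinR}, using the same split point $m_0 := \lceil \si^2/a\rceil$, but replacing the Taylor-type estimate $|\sin R|\le R$ by $|\cos R - 1|\le R^2/2$. Concretely, I would first write
\[
\sum_{m\ge 0}\biggl|\frac{\cos R(x)-1}{x}\biggr|
\;=\;
\sum_{m=0}^{m_0-1}\biggl|\frac{\cos R(x)-1}{x}\biggr|
+\sum_{m\ge m_0}\biggl|\frac{\cos R(x)-1}{x}\biggr|,
\]
and treat the two pieces by different bounds on $|\cos R(x)-1|$.

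For the head ($0\le m<m_0$), where $R(x)$ may be large, I would use the crude estimate $|\cos R(x)-1|\le 2$ to reduce the problem to the same kind of short harmonic sum already controlled in Lemma~\ref{l:sinR}. Specifically, the head is at most $2\sum_{m=0}^{m_0-1} 1/(am+b)$, which by the exact calculation in \eqref{harm0} is bounded by $2\cdot 1 = 2$.

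For the tail ($m\ge m_0$), the choice of $m_0$ guarantees, via Proposition~\ref{p:Rbnd} and \eqref{Rintail}, that $R(x)\le \si^2/x \le 1$, so the quadratic Taylor bound $|\cos R(x)-1|\le R(x)^2/2\le \si^4/(2x^2)$ applies. The tail is therefore majorized by $\sum_{m\ge m_0}\si^4/(2x^3)$, which I would estimate (just as in Lemma~\ref{l:sinR}) by splitting off the $m=m_0$ term and bounding the rest by an integral:
\[
\sum_{m\ge m_0}\frac{\si^4}{2x^3}
\;\le\;\frac{\si^4}{2(am_0+b)^3}
+\frac{\si^4}{2a}\int_{b+\si^2}^{\infty}\frac{dx}{x^3}
\;=\;\frac{\si^4}{2(am_0+b)^3}+\frac{\si^4}{4a(b+\si^2)^2}.
\]
Using $am_0+b\ge \si^2+\si$ (since $b>\si$) and the elementary bound $\si/(2(1+\si)^3)\le 4/27$ for the first summand, together with AM--GM $(b+\si^2)^2\ge 4b\si^2$ and $\si<a/2\le b$ for the second summand, I expect each summand to be a small absolute constant, so that the tail is comfortably $\le 1$.

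Adding the two bounds gives $2+1=3$, which is the required \eqref{cosR1}. The proof is structurally a clone of Lemma~\ref{l:sinR}, so there is no real obstacle; the only point requiring a little care is that the $|\cos R-1|\le R^2/2$ bound, being quadratic rather than linear, produces a tail decay of $x^{-3}$ instead of $x^{-2}$, so the integral estimate and the initial-term bookkeeping have to be redone (but both go through more easily than in the sine case because the series converges faster).
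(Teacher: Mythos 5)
Your proof is correct and is essentially identical to the paper's: same split at $m_0=\lceil\si^2/a\rceil$, same crude bound $|\cos R-1|\le 2$ on the head handled via \eqref{harm0}, same quadratic Taylor bound $|\cos R-1|\le R^2/2\le\si^4/(2x^2)$ on the tail, and the same first-term-plus-integral estimate of the tail sum; the only divergence is in the final elementary arithmetic used to show the tail is at most $1$, and both routes work.
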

\begin{proof}
  The proof is very similar to that of Lemma~\ref{l:sinR}.
  We choose $m_0$ in the same way.
  Then for all $m \ge m_0$, since $R(x)\le1$ then, by Taylor's theorem
  followed by Proposition~\ref{p:Rbnd},
  $$
  | \cos R(x) - 1| \le \frac{R(x)^2}{2} \le \frac{\si^4}{2x^2}~.
  $$
  Again splitting the sum, and bounding the first $m_0$ terms
  via $|\cos R(x) - 1|\le 2$, and the rest via the above formula,
  $$
  \sum_{m\ge0} \biggl| \frac{\cos R(x) - 1}{x} \biggr|
  \;\le \;
  \sum_{m=0}^{m_0-1} \frac{2}{x} +
  \frac{\si^4}{2} \sum_{m\ge m_0} \frac{1}{x^3}~.
  $$
  The first sum is no more than 2, using \eqref{harm0}.
  The second sum we bound by its first term plus an integral to get
  $$
  \frac{\si^4}{2}\sum_{m\ge m_0} \frac{1}{(am+b)^3}
  \;\le\;
  \frac{\si^4}{2(am_0+b)^3} + \frac{\si^4}{2a}\int_{b+\si^2}^\infty \frac{dx}{x^3}
  \;\le\;
  \frac{\si^4}{(am_0+b)^2} \cdot \frac{1}{2a}
  + \frac{\si^4}{2a}\cdot \frac{1}{2\si^4}        
  \;\le\; 1~,
  $$
  using $\si^2 < am_0+b$, $\si \le a/2$ and $a\ge \pi$.
  Adding these two bounds finishes the proof.   
\end{proof}

Finally, the above two proofs relied on the following fact that
the effect of warping the numerator is $\bigO(x^{-1})$.
\begin{pro}\label{p:Rbnd} 
  Let $\si\ge 0$ and $R(x) := x-\sqrt{x^2-\si^2}$ as in \eqref{R}.
  Then $R(x) \le \si^2/x$ for all $x \ge \si$.
\end{pro}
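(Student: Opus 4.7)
The plan is to prove this by rationalizing the numerator, which converts the difference of two close quantities into an equivalent form whose denominator is easy to bound from below.

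First I would multiply $R(x) = x - \sqrt{x^2 - \sigma^2}$ by the conjugate ratio $(x + \sqrt{x^2-\sigma^2})/(x + \sqrt{x^2-\sigma^2})$. The numerator then telescopes to $x^2 - (x^2 - \sigma^2) = \sigma^2$, giving the identity
\[
R(x) \;=\; \frac{\sigma^2}{x + \sqrt{x^2 - \sigma^2}}~.
\]
This identity is valid for all $x \ge \sigma \ge 0$, where the square root is real and nonnegative (and the denominator is strictly positive whenever $x>0$, with the degenerate case $\sigma = x = 0$ handled trivially).

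Next I would bound the denominator from below. Since $x \ge \sigma \ge 0$ implies $\sqrt{x^2 - \sigma^2} \ge 0$, we have $x + \sqrt{x^2 - \sigma^2} \ge x$. Taking reciprocals (both sides positive when $x>0$) and multiplying by $\sigma^2$ yields the desired inequality $R(x) \le \sigma^2 / x$. The edge case $\sigma = 0$ gives $R(x) = 0 \le 0$, and the case $x = \sigma > 0$ gives $R(\sigma) = \sigma = \sigma^2/\sigma$, showing the bound is in fact sharp at $x=\sigma$.

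There is no real obstacle here: the whole argument is a one-line rationalization followed by a trivial monotonicity observation. The only thing to watch is to state the hypothesis $x \ge \sigma$ carefully enough so that the square root is real and the manipulation is valid.
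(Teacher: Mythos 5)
Your proof is correct and is essentially the same as the paper's: both rationalize the difference (the paper first factors out $x$ and writes $R(x)=x\cdot(\sigma/x)^2/(1+\sqrt{1-(\sigma/x)^2})$, which is algebraically identical to your $\sigma^2/(x+\sqrt{x^2-\sigma^2})$) and then drop the nonnegative square-root term in the denominator. No meaningful difference in approach.
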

\begin{proof}
  Since $\si<x$, we expand
  $$
  R(x) = x\big(1-\sqrt{1-(\si/x)^2}\big) =
  \frac{x\cdot(\si/x)^2}{1+\sqrt{1-(\si/x)^2}}
  \le \frac{\si^2}{x}~.
  $$
\end{proof}

\section{Proof of improved rate for small $\alpha$ and $\beta$
  (Theorem~\ref{t:corner})}
\label{s:corner}

As with the other two main theorems,
Theorem~\ref{t:corner} combines
a lower bound on $\sigma_1(A)$ (Proposition~\ref{p:s1})
and an upper bound on $\sigma_\tbox{min}(A)$, in this case the following lemma.

\begin{lem}  
  Let $A$ be a cyclically contiguous $p\times q$ submatrix of the
  $N\times N$ discrete Fourier matrix $F$, with $1 < q\le p < 4N/e\pi + 1$.
  Then
  \be
  \sigma_\tbox{min}(A)
  \;\le\;
  \frac{2\sqrt{pq}}{1- (e\pi(p-1)/4N)}
  \left(\frac{e \pi(p-1)}{4N}\right)^{q-1}~.
  \label{corner}
  \ee
  \label{l:corner}
\end{lem}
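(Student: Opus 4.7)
The plan is to upper-bound $\sigma_{\tbox{min}}(A)$ via the
Eckart--Young--Schmidt (SVD rank) theorem
\cite[Thm.~2.5.3]{golubvanloan}, using an explicit rank-$(q-1)$
approximation $B$ of $A$ obtained by truncating a Chebyshev
expansion of the kernel $e^{ixt}$.  The chain
$\sigma_q(A)\le\|A-B\|\le\|A-B\|_F\le\sqrt{pq}\,\max_{j,k}|A_{jk}-B_{jk}|$
reduces the task to bounding the pointwise approximation error.

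First I reduce to centered indices.  Expanding the product $jk$ after
shifting each index set to be symmetric about zero produces row- and
column-dependent phase factors that can be absorbed into left and right
diagonal unitary matrices and therefore preserve the singular
values---exactly the cyclic-translation trick of Section~\ref{s:pf}.
Hence I may assume $A_{jk}=e^{2\pi i jk/N}$ with $|j|\le(p-1)/2$ and
$|k|\le(q-1)/2$ (the indices may be half-integer when $p$ or $q$ is
even, which does not affect what follows).

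Next I rescale by $s_k := 2k/(q-1)\in[-1,1]$ and $y_j := \pi(q-1)j/N$,
so that $A_{jk}=e^{iy_j s_k}$, then invoke the Jacobi--Anger/Chebyshev
expansion
\[
e^{iys} \;=\; J_0(y) + 2\sum_{n=1}^\infty i^n J_n(y)\,T_n(s),
\qquad s\in[-1,1],
\]
with $T_n$ the Chebyshev polynomial and $J_n$ the Bessel function of
the first kind.  Truncating at $n=q-2$ defines
$B_{jk}:=J_0(y_j)+2\sum_{n=1}^{q-2} i^nJ_n(y_j)T_n(s_k)$, which has
rank at most $q-1$.  Using $|T_n(s)|\le 1$ on $[-1,1]$ and the
classical bound $|J_n(y)|\le(|y|/2)^n/n!$, the pointwise error obeys
\[
|A_{jk}-B_{jk}| \;\le\; 2\sum_{n\ge q-1}\frac{w^n}{n!},
\qquad w := |y_j|/2 \le \frac{\pi(p-1)(q-1)}{4N}.
\]
Summing the tail geometrically (ratio $w/q<1$ by the hypothesis) and
applying the elementary bound $(q-1)!\ge((q-1)/e)^{q-1}$ yields
\[
|A_{jk}-B_{jk}|
\;\le\; \frac{2\,(e\pi(p-1)/4N)^{q-1}}{1-\pi(p-1)/(4N)}
\;\le\; \frac{2\,(e\pi(p-1)/4N)^{q-1}}{1-e\pi(p-1)/(4N)},
\]
where the last step uses $e>1$ to shrink the denominator.
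Multiplying by $\sqrt{pq}$ produces exactly~\eqref{corner}.

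\textbf{Main obstacle.}  The subtle point is obtaining the factor of
$4$ (rather than $2$) in the exponential base.  A direct Taylor
expansion of $e^{2\pi i jk/N}$ in $k$ only delivers
$(e\pi(p-1)/2N)^{q-1}$, losing a fatal $2^{q-1}$.  The
Jacobi--Anger expansion recovers the missing factor because the
rescaling puts $s$ in $[-1,1]$ and the Bessel coefficients decay like
$(|y|/2)^n/n!$ rather than $|y|^n/n!$---the $1/2$ inside the Bessel
bound is precisely what closes the gap.  Once that choice is in place,
the remaining steps (geometric tail, Stirling, and the $e>1$
absorption into the stated denominator) are routine.
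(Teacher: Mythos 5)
Your proposal is correct and follows essentially the same route as the paper: it applies the Eckart--Young--Schmidt theorem to a rank-$(q-1)$ truncation of the Jacobi--Anger/Bessel--Chebyshev expansion of $e^{ixt}$ sampled on a regular grid, and the identification of this expansion (rather than a Taylor series) as the source of the crucial factor of $4$ is exactly the paper's key observation. The only difference is a minor technical one in bounding $|J_n|$: you use the elementary DLMF bound $|J_n(y)|\le(|y|/2)^n/n!$ combined with Stirling and a geometric-tail argument (with the cosmetic $e>1$ relaxation of the denominator), whereas the paper invokes Siegel's bound $|J_n(nz)|\le[g(z)]^n$ followed by $g(z)\le ez/2$---both yield precisely the same final estimate.
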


\begin{proof}
  Following \cite[Lem.~3.2]{oneil07} we start with the Bessel--Chebyshev
  expansion
  \be
  e^{ixt} = J_0(x) + \sum_{n=1}^\infty 2 i^n J_n(x) T_n(t)~,
  \qquad \mbox{ uniformly for } \; x\in\R~, |t|\le 1~,
  \label{cheb}
  \ee
  arising by inserting $t=\cos \theta$ into the Jacobi--Anger
  expansion $e^{ix \cos\theta} = \sum_{n\in\Z}i^n J_n(x) e^{in\theta}$.
  The submatrix elements can be generated by
  sampling this kernel on the product of the regular grids 
  $x_j = (-1 + 2j/(p-1))W$, $j=1,\dots,p$ and 
  $t_k := -1 + 2k/(q-1)$, $k=1,\dots,q$.
  Scaling the $x$-domain size by setting $W = \pi(q-1)(p-1)/2N$ insures that
  the matrix with elements $A_{jk} = e^{i x_j t_k}$ is, up to irrelevant
  left and right multiplication by diagonal unitary matrices,
  equal to any given $p\times q$ contiguous submatrix of the
  size-$N$ Fourier matrix $F$.
  Defining for $n=0,1,\dots$
  the sequences of vectors $\mbf{u}_n\in\C^p$ and $\mbf{v}_n\in\C^q$,
  with elements $(\mbf{u}_0)_j \equiv 1$, $(\mbf{u}_n)_j:=2i^nJ_n(x_j)$
  for $n=1,2,\dots$, and $(\mbf{v}_n)_k:=T_n(t_k)$ for $n=0,1,\dots$,
  we rewrite the matrix of samples of \eqref{cheb} as the sum of
  rank-1 outer products
  $$
  A = \sum_{n=0}^\infty \mbf{u}_n \mbf{v}_n^\ast~.
  $$
  Since $q \le p$, then
  $\sigma_\tbox{min}(A) = \sigma_q(A)$.
  The rank approximation theorem (e.g., \cite[Thm.~2.5.3]{golubvanloan})
  bounds
  \bea
  \sigma_\tbox{min}(A) &\;\le\;&
  \biggl\|A - \sum_{n=0}^{q-2} \mbf{u}_n \mbf{v}_n^\ast\biggr\| \;=\;
  \biggl\|\sum_{n=q-1}^\infty \mbf{u}_n \mbf{v}_n^\ast\biggr\|
  \;\le\; \sum_{n=q-1}^\infty \|\mbf{u}_n\| \|\mbf{v}_n\|
  \nonumber \\
  &\;\le\;&
  2\sqrt{pq} \sum_{n=q-1}^\infty \max_{0\le x \le W} |J_n(x)|
\;\le\;
  2\sqrt{pq} \sum_{n=q-1}^\infty [g(W/n)]^n
  ~,
  \qquad \mbox{ if } q-1 > W~,
  \nonumber
  \eea
  where in the penultimate step we used $|T_n(t)|\le 1$,
  and in the last step Siegel's bound for $J_n(nz)$ \cite[10.14.5]{dlmf} where
  $g(z) := z e^{\sqrt{1-z^2}}/(1+\sqrt{1-z^2})$, for $z = x/n \le 1$.
  Note that $q-1>W$ is equivalent to $p<2N/\pi +1$ which holds by
  the hypotheses of the lemma.
  We now observe that $g(z)\le ez/2$ in $0<z\le1$, bound $g(W/n)$ by
  $g(W/(q-1))$, and recall $W/(q-1) = \pi(p-1)/2N < 1$, to get
  $$
  \sigma_\tbox{min}(A)
  \;\le\;
  2\sqrt{pq} \sum_{n=q-1}^\infty \left(\frac{e \pi(p-1)}{4N}\right)^n~,
  $$
  a bounded geometric sum when $(p-1)/N < 4/e\pi$,
  a hypothesis of the lemma,
  giving \eqref{corner}.
\end{proof}

\begin{rmk}\label{r:cornerrate}
  We initially tried the low-rank expansion of $e^{ixt}$ resulting from the
  Taylor series for $e^z$,
  following \cite[Lem.~1]{candes07},
  but found that this gave a rate replacing the exponential term in
  \eqref{corner} with
  $[e\pi(p-1)/2N]^{q-1}$, which is exactly half the rate in \eqref{corner},
  and even weaker than \eqref{liliao}.
  It is therefore possible that yet other low-rank expansions,
  such as the double-Chebyshev \cite[App.~A]{townsendnufft},
  could further improve the rate in the corner region $\alpha,\beta\ll 1$.
\end{rmk}

\bfi[t!] 
\centering
\ig{width=5.2in}{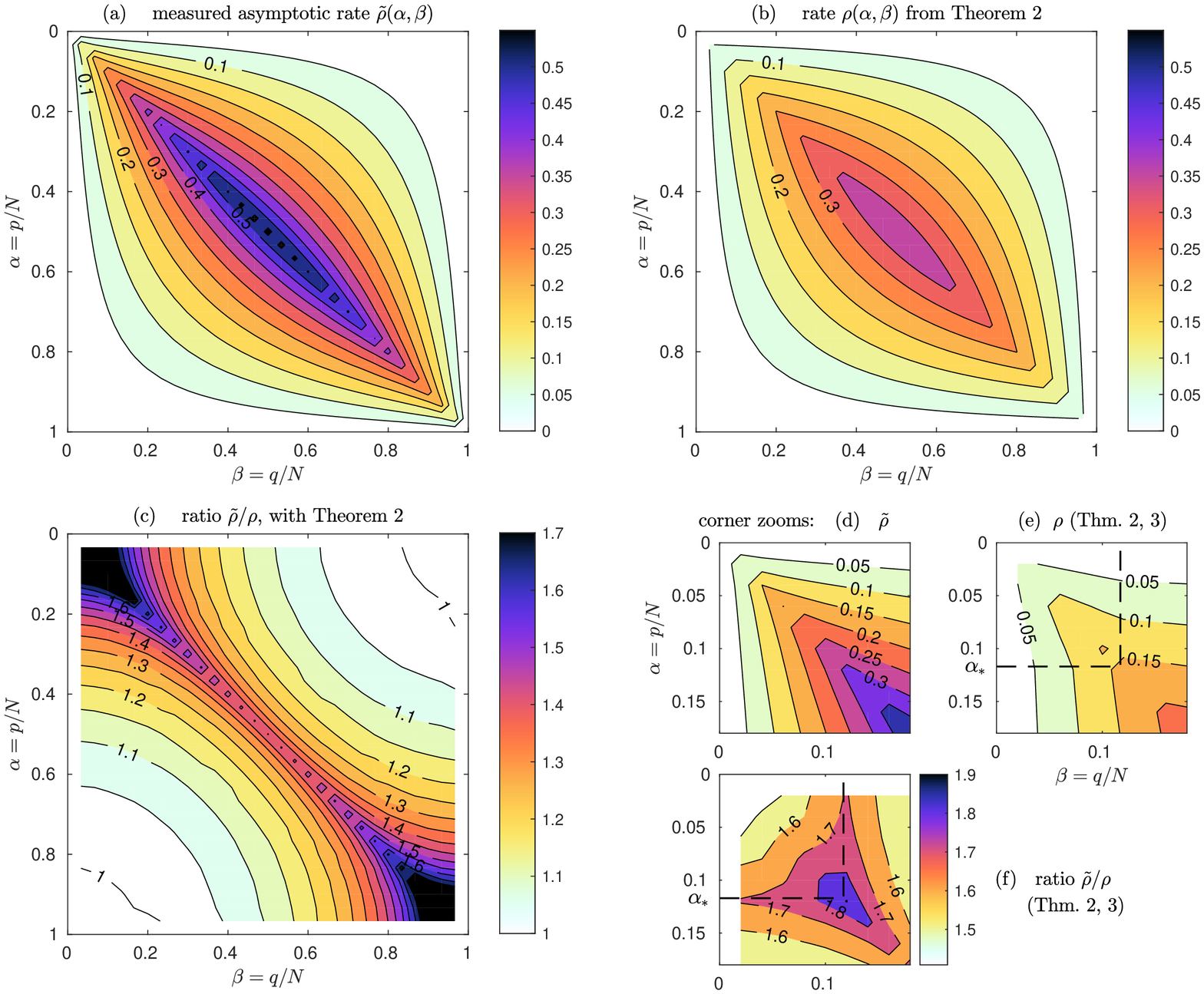}
\ca{
  Comparison of exponential growth rates with $N$
  of the condition number of a
  $p\times q$ submatrix of the size-$N$ Fourier matrix, as a function of
  shape parameters $\al:=p/N$ and $\bt:=q/N$.
  The $\al$ axis increases downward to match the usual matrix
  convention.
  (a) shows contours of the numerically-measured rate, fitting the
  model $\cond(A) \approx C e^{\tilde\rho(\al,\bt) N}$, for $(\al,\bt)$ sampled
  on a grid of spacing $1/30$; see Remark~\ref{r:num}.
  (b) shows the lower bound $\rho=\frac{\pi}{2}[\min(\alpha,\beta) - \alpha\beta]$
  from Theorem~\ref{t:kb}.
  (c) shows the ratio of (a) to (b), i.e., the sharpness of the theorem.
  The breakup into small dots along the diagonals of (a) and (c) are
  contouring artifacts.
  Panels (d,e,f) show a zoom around the origin of panels (a,b,c) respectively,
  on a grid of spacing $1/50$,
  but also applying Theorem~\ref{t:corner} in the corner $(0,\al_\ast)^2$
  (shown dotted),
  where it is stronger than Theorem~\ref{t:kb}.
}{f:rates}
\efi

\section{Discussion}
\label{s:disc}

\subsection{Numerical study of sharpness of exponential rate bounds}
\label{s:num}

Figure~\ref{f:nearsymm} showed $\cond(A)$ for various fixed submatrices $A$,
at small $N$.
However, Theorems~\ref{t:m}--\ref{t:corner}
may be seen as lower bounds on the
{\em asymptotic} exponential growth rate with respect to $N$,
for fixed shape $(\al,\bt)$, as summarized in Table~\ref{t:sum}.
Thus we numerically measured the {\em empirical} asymptotic growth rate
$\tilde\rho(\al,\bt)$
along the family of submatrices sharing a given $(\al,\bt)$, but growing $N$, to
see how close our lower bounds were to this rate.
The result is Figure~\ref{f:rates}.
Comparing its panels (a) and (b) shows that (the symmetrized)
Theorem~\ref{t:kb} captures well
the main features of the empirical rate.
Indeed, the ratio $\tilde\rho/\rho$, plotted in (c),
is less than 1.2 in 54\% of the area of the square $(0,1)^2$.
This ratio approaches 1
in the neighborhoods of $(1,0)$ and $(0,1)$,
as expected since Theorem~\ref{t:kb} has a sharp rate there (Remark~\ref{r:prolrate}).
However, in quite a tight band around the diagonal $\al=\bt$ the
empirical rate exceeds this lower bound by a significant factor.
This factor is about 1.45
at $(1/2,1/2)$, and along the diagonal grows slowly but without bound as
the corners $(0,0)$ and $(1,1)$ are approached.

However, we know that the rate of Theorem~\ref{t:corner} beats
that of Theorem~\ref{t:kb}
inside the corner $(0,\alpha_\ast)^2$, where $\alpha_\ast\approx 0.117$
(see Section~\ref{s:res}).
Hence, in Figure~\ref{f:rates}(d--f) we zoom in to the neighborhood of this
region, comparing the empirical rate to the
stronger of these two theorems
(their cross-over at $\max(\al,\bt)=\alpha_\ast$ is visible as
kinks in the contour lines in (e) and (f)).
As panel (f) shows, the ratio now appears {\em uniformly} bounded,
reaching its maximum at $\al=\bt=\alpha_\ast$.
An estimate of this maximum is given by the ratio at $(0.12,0.12)$, which
is less than $1.91$.
As $\bt\to0$, the ratio peaks at a value about $1.7$ at $\al=\al_\ast$.
As $(\al,\bt)\to(0,0)$, the ratio drops.
Remark~\ref{r:prolrate} on the prolate
asymptotic suggests that
the ratio tends to 1 (logarithmically slowly) if approached along the axes.

\begin{rmk}[measuring the empirical rate $\tilde\rho(\al,\bt)$]\label{r:num}
  Generating Figure~\ref{f:rates} needed an accurate
  measurement of the constant $\tilde\rho$ in the
  asymptotic model $\cond(A) \sim ce^{\tilde\rho N}$ as $N\to\infty$.
  Recall that here $A$ takes a sequence of {\em integer} sizes $p\times q$
  with $p/N=\al$ and $q/N=\bt$ fixed (and, thus, rational).
  This places constraints on the allowable $p$ and $q$, making the
  measurement subtle.
  As a example, to estimate $\tilde\rho(0.5,0.49)$ the smallest
  possible case of $A$ is a $50\times 49$ submatrix with $N=100$,
  and yet already $\cond(A)>10^{16}$, exceeding
  the value reliably measurable in double-precision arithmetic.
  
  There are two ways out of this quandary: either compute SVDs via a
  higher precision arithmetic library, or limit oneself to $\al$ and
  $\bt$ with small rational denominators. We opted for the
  latter.  This led to the choices of $1/30$ and $1/50$ for the grid
  spacings in Figure~\ref{f:rates}, which were adequate.  We wrote a
  simple code which, given such a pair $(\al,\bt)$, uses a bisection
  search along the family of $A$ with this shape, identifying the
  largest $A$ for which $\cond(A)$ does not exceed an upper limit of
  around $10^{16}$. This data point is combined with another taken
  from the $A$ nearest to half this size (giving $\cond(A)\sim 10^8$,
  bypassing any pre-asymptotic growth).
  The $\tilde\rho$ returned is the slope between (the log of) these two
  points.
  The accuracy is around $\pm0.02$, estimated by comparing
  equivalent rational forms for $(\al,\bt)$.  The code ran in MATLAB
  R2017a on a laptop with Intel i7 CPU, taking a few seconds to
  generate all figures in this paper.  Certainly a more elaborate
  arbitrary-precision code could be built, but the above served our
  purposes well.
\end{rmk}

The code described above, plus those generating the other figures in this paper,
can be found at 
{\tt https://github.com/ahbarnett/fourier-submat}

\subsection{Symmetry and near-symmetry in the $(\alpha,\beta)$ plane}
\label{s:symm}

The empirical rate plot Figure~\ref{f:rates}(a)
appears to have two symmetries (i.e.\ the $D_2$ dihedral group).

The first is that $\cond(A)$ is invariant to the diagonal reflection
$(\alpha,\beta) \mapsto (\beta,\alpha)$,
which follows immediately from the fact that
a submatrix of swapped dimensions is the adjoint, and $\cond(A)^\ast = \cond(A)$.

The second symmetry in Figure~\ref{f:rates}(a)
is that the rate also appears
invariant under $(\alpha,\beta) \mapsto (1-\alpha,1-\beta)$,
i.e., inversion about $(1/2,1/2)$, or complementing both row and column
index sets.
However, understanding this is more subtle.
From the rate one might suspect that $\cond(A)$ itself is
also invariant under changing the
submatrix from size $p\times q$ to $(N-p)\times(N-q)$.
But this cannot be true, as the case $p=q=1$ shows:
the condition number of any $1\times 1$ matrix is unity,
and letting $A$ now be a $(N-1)\times(N-1)$ submatrix of $F$
gives $\sigma_1(A) = \sqrt{N}$ by the interlacing property
(e.g.\ \cite[Thm.~1]{thompson72}).
Choosing the omitted row and column to be $j=k=0$,
it is easy to check that $A$ acting on the vector of all ones is sent to
the negative of this vector, showing $\sigma_\tbox{min}(A) \le 1$
and $\cond(A) \ge \sqrt{N}$ (this turns out to be an equality), which
is certainly not unity!

The condition number of a submatrix of $F$
is {\em not} invariant with respect to complementing
its dimensions $p\times q$ to give $(N-p)\times(N-q)$,
yet as $N$ grows this becomes a {\em near} symmetry.
Figure~\ref{f:nearsymm} shows how strikingly rapid this is:
even at $N=8$ the near-symmetry is clear, and by $N=16$
the {\em failure} of symmetry is almost impossible to see.
This seems quite mysterious until we spot the following identity.

\begin{pro}  
  Let $A$ be a $p\times q$ submatrix of $F$ with $p+q<N$,
  and let $D$ be the
  $(N-p)\times(N-q)$ submatrix of $F$ defined by the complement of the
  row and column index sets of $A$. Then
  \be
  \frac{\cond(A)}{\cond(D)} \; = \;
  \sqrt{1 - \frac{(\sigma_\tbox{\rm min}(C))^2}{N}}
  ~,
  \label{condrat}
  \ee
  where $C$ is the $(N-p)\times q$ submatrix of $F$ with the same
  column indices as $A$, and the complement of the row indices.
  \label{p:condrat}
\end{pro}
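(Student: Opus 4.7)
The plan is to exploit the unitarity of $U := F/\sqrt{N}$. Partition the rows and columns of $U$ according to the (row, column) index sets of $A$ and their complements, yielding a $2\times 2$ block decomposition with $U_{11} = A/\sqrt{N}$, $U_{12}$ the $p\times(N-q)$ complementary-column block, $U_{21} = C/\sqrt{N}$, and $U_{22} = D/\sqrt{N}$. The identities $U^\ast U = I_N$ and $UU^\ast = I_N$ give the two block identities I need:
\be
U_{11}^\ast U_{11} + U_{21}^\ast U_{21} = I_q~, \qquad
U_{21} U_{21}^\ast + U_{22} U_{22}^\ast = I_{N-p}~.
\label{blockids}
\ee

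The key observation is that in each identity the two positive semidefinite summands commute (they sum to $I$), hence share an eigenbasis. Writing the singular values of $U_{11}$ as $1\ge a_1\ge\dots\ge a_r\ge 0$, where $r:=\min(p,q)$, I would first use the left identity in \eqref{blockids} to read off the spectrum of $U_{21}^\ast U_{21}$: its eigenvalues are $\{1-a_i^2\}_{i=1}^{r}$ together with $q-r$ eigenvalues equal to $1$ (arising from the rank deficiency of $U_{11}^\ast U_{11}$ when $p<q$, and absent when $q\le p$). Since every $a_i\le 1$, taking the minimum gives the key identity
\be
\sigma_{\min}(U_{21})^2 = 1 - a_1^2 = 1 - \sigma_{\max}(U_{11})^2~,
\label{sminid}
\ee
valid in either orientation of $A$.

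Next, I would apply the same argument to the right identity in \eqref{blockids}. The eigenvalues of $U_{21}U_{21}^\ast$ are the nonzero eigenvalues of $U_{21}^\ast U_{21}$ padded by $(N-p)-q$ zeros; subtracting from $I_{N-p}$, the eigenvalues of $U_{22}U_{22}^\ast$ come out to $\{a_i^2\}_{i=1}^{r}$ together with $(N-p-q)$ ones (and possibly some zeros that correspond to the rank deficiency of $U_{22}$, but these do not affect $\sigma_{\min}$ or $\sigma_{\max}$ calculations below). The hypothesis $p+q<N$ is used precisely here, to ensure at least one of the ``$1$'' eigenvalues is present. Hence $\sigma_{\max}(U_{22}) = 1$ and $\sigma_{\min}(U_{22}) = a_r = \sigma_{\min}(U_{11})$, so $\cond(U_{22}) = 1/\sigma_{\min}(U_{11})$.

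Putting it together,
\be
\frac{\cond(A)}{\cond(D)}
= \frac{\cond(U_{11})}{\cond(U_{22})}
= \frac{\sigma_{\max}(U_{11})/\sigma_{\min}(U_{11})}{1/\sigma_{\min}(U_{11})}
= \sigma_{\max}(U_{11})~,
\nonumber
\ee
and applying \eqref{sminid} together with $\sigma_{\min}(C) = \sqrt{N}\,\sigma_{\min}(U_{21})$ yields \eqref{condrat}. The one step requiring care---and the main potential obstacle---is the bookkeeping in \eqref{sminid} and in the spectrum of $U_{22}U_{22}^\ast$, since one must track rank deficiencies and whether $p\le q$ or $q\le p$; a CS-decomposition viewpoint makes these multiplicities explicit and confirms that the ``extra'' singular values equal to $1$ that appear in $U_{21}$ or $U_{22}$ in the rank-deficient cases never disturb the extremal singular values used above.
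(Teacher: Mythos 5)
Your proof is correct. It uses the same underlying fact as the paper---unitarity of $U := F/\sqrt{N}$ applied to the $2\times 2$ block partition---but packages it differently: you read off the \emph{entire} spectra of $U_{21}^\ast U_{21}$ and $U_{22}U_{22}^\ast$ from the block identities $U^\ast U = I$ and $UU^\ast = I$ (the ``CS decomposition'' viewpoint), whereas the paper argues variationally with specific extremal vectors, deriving only the four scalar identities
$(\sigma_\tbox{min}(A))^2 + (\sigma_\tbox{max}(C))^2 = N$, $(\sigma_\tbox{max}(A))^2 + (\sigma_\tbox{min}(C))^2 = N$, $(\sigma_\tbox{min}(D))^2 + (\sigma_\tbox{max}(C))^2 = N$, and $(\sigma_\tbox{max}(D))^2 = N$. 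Your approach buys a cleaner, more systematic derivation: the full singular-value correspondence $\{a_i^2\}\leftrightarrow\{1-a_i^2\}$ falls out at once, the two cases $p\le q$ and $q\le p$ are absorbed into the single parameter $r=\min(p,q)$ rather than requiring a WLOG reduction, and the hypothesis $p+q<N$ appears transparently as the count of ``extra'' unit eigenvalues forcing $\sigma_\tbox{max}(D)=\sqrt{N}$. The paper's route is more hands-on and avoids any explicit spectral bookkeeping. One small wording nit: where you say ``the eigenvalues of $U_{21}U_{21}^\ast$ are the nonzero eigenvalues of $U_{21}^\ast U_{21}$ padded by $(N-p)-q$ zeros,'' the cleaner statement is that the eigenvalue multi-set of $U_{21}U_{21}^\ast$ equals that of $U_{21}^\ast U_{21}$ together with $(N-p)-q$ additional zeros; this avoids ambiguity in case $U_{21}^\ast U_{21}$ itself has a kernel (which would happen if $\sigma_\tbox{max}(A)=\sqrt{N}$). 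This does not affect the extremal singular values you extract, so the argument goes through unchanged.
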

\begin{proof}
  If $p<q$ then we apply the diagonal reflection symmetry, so that
  we can from now take $p\ge q$.
  We apply a permutation moving $A$ to the upper left position in $F$,
  which does not change $\cond(A)$ nor $\cond(D)$,
  then label the blocks as above, with the missing fourth block named $B$:
$$
F\vt{\mbf{v}}{\mbf{0}}
=
\left[\begin{array}{rr}A&B\\C&D\end{array}\right]\vt{\mbf{v}}{\mbf{0}}
=
\vt{\mbf{u}}{\mbf{w}}
~.
$$
The rest is simple singular value inequalities.
If $\mbf{v}$ is the normalized minimum right singular vector for $A$,
then because $A$ is square or ``tall'',
$\|\mbf{u}\|=\|A\mbf{v}\| = \sigma_\tbox{min}$ is as small as possible over
unit vectors $\mbf{v} \in \C^q$.
However, by isometry of $F$, $\|\mbf{u}\|^2 + \|\mbf{w}\|^2 = N$,
so that $\|\mbf{w}\| = \|C\mbf{u}\|$ is the largest possible, hence
$\mbf{v}$ is also the normalized maximum right singular vector for $C$.
Thus
\be
(\sigma_\tbox{min}(A))^2 + (\sigma_\tbox{max}(C))^2  = N~.
\label{AC}
\ee
Swapping the words ``minimum'' and ``maximum'' in this argument
requires additionally that $C$ not be ``fat'', which is true
by the assumption $q<N-p$.
This gives
\be
(\sigma_\tbox{max}(A))^2 + (\sigma_\tbox{min}(C))^2  = N~.
\label{CA}
\ee
We now reason similarly for the
blocks of $F^\ast = [A^\ast, C^\ast; B^\ast, D^\ast]$, multiplying it against
$[\mbf{0}; \mbf{v}]$ to sample its right two blocks.
By the assumption, $D^\ast$ is not fat, so may play the role of
$A$ in \eqref{AC}, giving
\be
(\sigma_\tbox{min}(D))^2 + (\sigma_\tbox{max}(C))^2  = N~.
\label{DC}
\ee
However, $C^\ast$ is fat, so the minimum of $\|C^\ast\mbf{v}\|$
over unit-norm $\mbf{v} \in \C^q$ is zero, leaving
\be
(\sigma_\tbox{max}(D))^2  = N~.
\label{0D}
\ee
Combining \eqref{AC} and \eqref{DC} shows
$\sigma_\tbox{min}(A)=\sigma_\tbox{min}(D)$.
Thus the left-hand side of  \eqref{condrat} is
$\sigma_\tbox{max}(A)/\sigma_\tbox{max}(D)$, and
inserting  \eqref{0D} and \eqref{CA} completes the proof.
\end{proof}

This proposition means that $\cond(A)$ for a submatrix shape
$(\alpha,\beta)$ in the triangle $\alpha+\beta < 1$
is {\em strictly} smaller than $\cond(A)$ for its
inverted shape $(1-\alpha,1-\beta)$.
However, the relative difference between the two condition numbers
vanishes as $C$, itself a submatrix of shape $(1-\alpha,\beta)$,
becomes highly ill-conditioned.
Since as $N$ grows this happens everywhere except $\alpha,\beta\approx 0$,
the mystery of the near-symmetry of Figure~\ref{f:nearsymm} is explained.
By the lower bound of
Theorem~\ref{t:m} or \ref{t:kb}
this occurs eventually for any $(\alpha,\beta)\in(0,1)^2$,
proving that the true {\em asymptotic} exponential growth rate
is exactly inversion symmetric.

Intriguingly, the rates in Theorems~\ref{t:m} and \ref{t:kb} also
obey this inversion symmetry; in their proofs this can be
traced to the fixed ``Heisenberg''
product of the sizes of the sets $\overline{P}$
and $Q$ (e.g.\ see Figure~\ref{f:idea}(b), and \eqref{sig}).
Yet Theorem~\ref{t:corner} cannot obey it, because its maximum $\al$ allowed
is less than $1/2$.
However, combining it with Proposition~\ref{p:condrat}
allows it to be also applied near the $(1,1)$ corner, as follows.
\begin{cor} 
  The lower bound on exponential rates given in the last row of
  Table~\ref{t:sum} also apply with the replacement of
  $(\al,\bt)$ by $(1-\al,1-\bt)$, in the region $\al$, $\bt > 1-4/e\pi$.
\end{cor}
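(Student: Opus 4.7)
The plan is to combine Proposition~\ref{p:condrat} with Theorem~\ref{t:corner}, using the former to transport the latter's bound from the small-shape corner $(0,4/e\pi)^2$ to the diagonally opposite corner $(1-4/e\pi,1)^2$. No new estimation is required; the two ingredients are already in place.

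Concretely, let $D$ be a cyclically contiguous $p\times q$ submatrix of $F$ with shape $(\al,\bt):=(p/N,q/N)$ satisfying $\al,\bt>1-4/e\pi$. Let $A$ be the complementary $(N-p)\times(N-q)$ submatrix, whose shape is $(1-\al,1-\bt)\in(0,4/e\pi)^2$. I first verify the hypothesis $(1-\al)+(1-\bt)<1$ of Proposition~\ref{p:condrat} (applied with my $A$ and $D$ in the roles of its $A$ and $D$): this holds because $\al+\bt>2(1-4/e\pi)>1$, using $4/e\pi\approx 0.468<1/2$. The proposition then gives
\[
\cond(D)\;=\;\frac{\cond(A)}{\sqrt{1-\sigma_\tbox{min}(C)^2/N}}\;\ge\;\cond(A),
\]
since the denominator lies in $[0,1]$.

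Next, I apply Theorem~\ref{t:corner} to $A$, whose shape parameters both lie strictly below $4/e\pi$. This yields an exponential lower bound on $\cond(A)$ with asymptotic rate $\min(1-\al,1-\bt)\log(4/(e\pi\max(1-\al,1-\bt)))$, matching the last row of Table~\ref{t:sum} evaluated at $(1-\al,1-\bt)$. By the inequality above, the same lower bound transfers immediately to $\cond(D)$, proving the corollary. There is no genuine obstacle here; the only point worth flagging is that Proposition~\ref{p:condrat}'s ratio identity goes in the favorable direction ($\cond(D)\ge\cond(A)$ rather than the reverse), so the bound is pushed from the small-shape corner to the large-shape corner without any loss in the rate.
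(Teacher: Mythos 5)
Your argument is correct and is precisely the one the paper intends (the paper states the corollary without a separate proof, saying only that it follows by "combining [Theorem~\ref{t:corner}] with Proposition~\ref{p:condrat}"): you verify the hypothesis $(1-\al)+(1-\bt)<1$ of Proposition~\ref{p:condrat} from $\al,\bt>1-4/e\pi>1/2$, observe that the identity \eqref{condrat} forces $\cond(D)\ge\cond(A)$ because $\sqrt{1-\sigma_\tbox{min}(C)^2/N}\le 1$, and then apply Theorem~\ref{t:corner} to the small complementary block $A$ whose shape parameters both lie below $4/e\pi$. The one detail worth being slightly more careful about — and you do flag the essential points — is that the corollary is a statement about rates only, so the (possibly $N$-dependent) prefactor change from the inequality $\cond(D)\ge\cond(A)$ is immaterial.
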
   

\subsection{Conclusions and open problems}

Fourier submatrices are quite elementary objects, arising in many
applications, yet until now there have been very few rigorous bounds on
their conditioning.
Our new non-asymptotic lower bounds on the condition
number of a $p\times q$ submatrix of the size-$N$ Fourier matrix
apply to all $N$ and, when combined with
the obvious $p\leftrightarrow q$ symmetry, essentially all $p$ and $q$.
All constants are explicit.
Interpreted as asymptotic results for $p$ and $q$ fixed fractions of $N$,
as $N\to\infty$, they give exponential lower bounds with
rates listed in Table~\ref{t:sum}.
Numerical study (Section~\ref{s:num}) shows that
their rates capture quite well the empirical growth,
which is also exponential, uniformly over shape space.
Section~\ref{s:symm} explained a non-obvious symmetry effect in this space.

As an example application of our results, in 1D Fourier extension
with the parameters $T=\gamma = 2$ recommended by \cite{adcock14},
the exponential growth rate%
\footnote{In the notation of \cite{adcock14}, $\kappa(\overline{A})$
  grows at least as fast as $e^{3\pi N / 2}$, up to algebraic prefactors.}
has the bound $\rho(1/2,1/4) \ge 3\pi/16$, by applying
Theorem~\ref{t:kb}.

Our methods are elementary: Theorem~\ref{t:m} used
a Gaussian trial vector, Theorem~\ref{t:kb} a more sophisticated
trial vector and quite detailed estimates,
while Theorem~\ref{t:corner}, applying only in the ``corner'' region
of small $\al$ and $\bt$, used the SVD rank approximation theorem.
In Theorem~\ref{t:kb}, we
achieved a rate that is {\em sharp} in the limit $(\al,\bt) \to (1,0)$,
or $(0,1)$, due to using the
(deplinthed) Kaiser--Bessel pair \eqref{DKBpair}. This shares
with the prolate spheroidal wavefunctions
an optimal frequency localization rate in $L^2(\R)$,
but is more accessible.
We suspect that this is its first use as a pure analysis tool.

Our rate is {\em not} sharp long the axis $(\al,0)$, or $(0,\bt)$,
but possibly could be made so
by using a discrete prolate spheroidal sequence (DPSS) \cite{slepianV}
as the trial vector $\mbf{v}$.
This would require estimating {\em finite sums} over the
tails of $H(\omega)$ in \cite{slepianV}, for which only asymptotics are known.
It is also unclear whether this would simply duplicate \eqref{prolrate}.
Another approach would be to extend the methods of \cite{zhu17}.
Also see Remark~\ref{r:cornerrate}.

This paper studied lower bounds in depth; we did not
address upper bounds at all, which seems to be quite an open area.
On this topic, for now we direct the reader to
\cite{bazan00},
and rather special cases in recent super-resolution literature:
\cite[Thm.~2, case $A=1$]{liliao18},
\cite[Thm.~3.2]{batenkov20},
and \cite[\S 4]{kunis19}.
These references also deal with the generalization
to non-uniform Fourier matrices (also see \cite{townsendnufft}).
In 2D and 3D applications, Kronecker products of Fourier submatrices
arise; we explore one such direction in \cite{epsteinhole}.

Our numerical study suggests fascinating open problems.
There appears to be a {\em universal} exponential rate as a function
of shape, as shown in Figure~\ref{f:rates}(a); what is it?
(Answering this would be equivalent to writing an
asymptotic form for the smallest P-DPSS eigenvalue \cite{grunbaum81,zhu17}.)
In particular, what phenomenon
explains the sudden spike in growth rate
around the diagonal, where the submatrix tends to become square?

\section{Acknowledgments}
The author benefited from helpful discussions with
Hannah Lawrence, Daan Huybrechs,
Charlie Epstein, David Barmherzig, and Alex Townsend, and from corrections by
Dominik Nagel.
The Flatiron Institute is a division of the Simons Foundation.

\appendix
\section{Proof of the Kaiser--Bessel Fourier transform pair}
Here we prove \eqref{KBpair}, in two steps.
The first will be to establish a related Fourier transform,
\be
\int_{-1}^1 J_0(b\sqrt{1-z^2})e^{ikz} \,dz \;=\; 2 \sinc \sqrt{k^2+b^2}
~, \qquad \mbox{ for } b\in\R, \; k\in\R~.
\label{Jpair}
\ee
This is equivalent to a formula in Gradshteyn--Ryshik
\cite[6.677(6)]{GS8}, although neither of the books cited therein
prove it or give a reference; the scent goes cold.
We prove it simply by {\em averaging a plane wave over the unit sphere}.
The second step will analytically continue this formula to
imaginary $b$.

\begin{proof}
  We insert the integral representation
\cite[(10.9.1)]{dlmf}
  of the Bessel function
  $J_0(s) = (2\pi)^{-1} \int_0^{2\pi} e^{is\sin \phi} d \phi$
  into the left-hand side of \eqref{Jpair}, then change variable
  $z=\cos\theta$, thus
  \bea
  &&\int_{-1}^1 J_0(b\sqrt{1-z^2})e^{ikz}\, dz \;=\;
  \frac{1}{2\pi}\int_{-1}^1 \int_0^{2\pi} e^{ib\sqrt{1-z^2}\sin \phi}e^{ikz} \,d\phi\,dz
  \nonumber \\
  && \qquad =\;
  \frac{1}{2\pi}\int_0^\pi \int_0^{2\pi} e^{ib\sin\theta \sin \phi + ik\cos\theta} \sin\theta \,d\phi\, d \theta
  \;=\; \frac{1}{2\pi}\int_{S^2} e^{i(0,b,k) \cdot (x,y,z)} \,dS_{(x,y,z)}
  \nonumber \\
  && \qquad = \;
    \frac{1}{2\pi}\int_{S^2} e^{i\sqrt{k^2+b^2}z} \,dS_{(x,y,z)}
    \;= \; \int_{-1}^1 e^{i\sqrt{k^2+b^2} z} \,dz
    \;= \; 2 \sinc \sqrt{k^2 + b^2}~,
  \nonumber
  \eea
  proving \eqref{Jpair}.
  The key step passing to the last line is invariance of the mean under
  rotation of the coordinate system, whose new $z$ axis points
  in the direction $(0,b,z)$.

  Although \eqref{Jpair} assumed real $b$, our second step
  shows that, fixing $k$,
  both sides are in fact entire with respect to $b$.
  This holds for the right-hand side
  because the square-root has singularities only at $b=\pm ik$,
  but these are removed by the entire function
  $\sinc$ having only even powers at its origin.
  The left-hand side is entire because the integrand is entire for each $z$
  and continuous in $(b,z)$, so one may apply \cite[Thm.~5.4, Ch.~2]{steinshakarchi}.
  By unique continuation the equality thus holds for all $b\in\C$,
  in particular $b=i\sigma$, so, using
  $I_0(s) = J_0(is)$ from \cite[(10.27.6)]{dlmf}, this gives
  $$
  \int_{-1}^1 I_0(\si\sqrt{1-z^2})e^{ikz} \,dz \;=\; 2 \sinc \sqrt{k^2-\si^2}~.
  $$
  Rewriting $k=2\pi\omega$ and $z=t$ gives \eqref{KBpair}.
\end{proof}

\bibliographystyle{abbrv}
\bibliography{alex}
\end{document}